\newcommand{\Z}{\mathbb Z}
\newcommand{\su}{\subseteq}
\newtheorem{thm}{Theorem}[section]
\newtheorem{lem}[thm]{Lemma}
\newtheorem{obs}[thm]{Observation}
\theoremstyle{definition}
\newtheorem{defn}[thm]{Definition}
\theoremstyle{remark}
\newtheorem{rem}[thm]{Remark}
\newtheorem{case}{Case}[thm]
\author{Santanu Mondal, Krishnendu Paul, Shameek Paul%
\thanks{E-mail addresses: \texttt{santanu.mondal.math18@gm.rkmvu.ac.in, krishnendu.p.math18@gm.rkmvu.ac.in, shameek.paul@rkmvu.ac.in}}}
\affil{\small Ramakrishna Mission Vivekananda Educational and Research Institute, P.O. Belur Math, Dist. Howrah, 711202, India}
\date{}
\title {Extremal sequences related to the Jacobi symbol}
\begin{document}

\baselineskip=14.5pt

\maketitle

\begin{abstract}
For a weight-set $A\su \Z_n$, the $A$-weighted zero-sum constant $C_A(n)$ is defined to be the smallest natural number $k$, such that any sequence of $k$ elements in $\Z_n$ has an $A$-weighted zero-sum subsequence of consecutive terms. A sequence of length $C_A(n)-1$ in $\Z_n$ which does not have any $A$-weighted zero-sum subsequence of consecutive terms will be called a $C$-extremal sequence for $A$. 

Let $\big(\frac{x}{n}\big)$ denote the Jacobi symbol of $x\in\Z_n$. We characterize the $C$-extremal sequences for the weight-set $S(n)=\big\{\,x\in U(n):\big(\frac{x}{n}\big)=1\,\big\}$ and for the weight-set $L(n;p)=\big\{\,x\in U(n):\big(\frac{x}{n}\big)=\big(\frac{x}{p}\big)\,\big\}$ where $p$ is a prime divisor of $n$. We can define $D$-extremal sequences for these weight-sets in a way analogous to the definition of $C$-extremal sequences. We also  characterize these sequences. 
\end{abstract}

\bigskip

Keywords: Extremal sequences, Zero-sum constants, Jacobi symbol

\bigskip

AMS Subject Classification: 11B50

\vspace{.3cm}

\section{Introduction}\label{0}

This paper is a complementary paper to \cite{SKS3} and we have used a few results from \cite{SKS3}. We recall some notations which have been used in that paper in this section. 

\begin{defn} 
For a subset $A \su \Z_n$, the $A$-weighted Davenport constant $D_A(n)$, is defined to be the least positive integer $k$ such that any sequence in $\Z_n$ of length $k$ has an $A$-weighted zero-sum subsequence. 
\end{defn}

This constant was introduced in \cite{AR}.

\begin{defn} 
For a subset $A \su \Z_n$, the $A$-weighted zero-sum constant $C_A(n)$, is defined to be the least positive integer $k$ such that any sequence in $\Z_n$ of length $k$ has an $A$-weighted zero-sum subsequence of consecutive terms. 
\end{defn}

This constant was introduced in \cite{SKS1}. Observe that $D_A(n)=1$ if and only if $0\in A$. The same is true for the constant $C_A(n)$.

\begin{defn}
Let $0\notin A$. Then there exists a sequence $S$ in $\Z_n$ of length $D_A(n)-1$ which has no $A$-weighted zero-sum subsequence. We will call such a sequence a $D$-extremal sequence for $A$.  
\end{defn}

This definition was given in \cite{AMP} where such sequences were characterized for some weight sets. The following definition was given in \cite{SKS2}.

\begin{defn}
Let $0\notin A$. Then there exists a sequence $S$ in $\Z_n$ of length $C_A(n)-1$ which has no $A$-weighted zero-sum subsequence of consecutive terms. We will call such a sequence a $C$-extremal sequence for $A$. 
\end{defn}

Let $U(n)$ denote the multiplicative group of units in the ring $\Z_n$. Let $U(n)^2$ denote the subgroup $\{\,x^2:x\in U(n)\,\}$. For an odd prime $p$, let $Q_p$ denote the set $U(p)^2$. If $p$ is a prime divisor of $n$, we use the notation $v_p(n)=r$ to mean that $p^r\mid n$ and $p^{r+1}\nmid n$. If $n$ is squarefree, then we define $\Omega(n)$ to be the number of prime divisors of $n$. 

The following are some of the results which have been proved in this paper. Let $\big(\frac{x}{n}\big)$ denote the Jacobi symbol of $x\in\Z_n$, $S(n)=\big\{\,x\in U(n):\big(\frac{x}{n}\big)=1\,\big\}$ and $L(n;p)=\big\{\,x\in U(n):\big(\frac{x}{n}\big)=\big(\frac{x}{p}\big)\,\big\}$ where $p$ is a prime divisor of $n$. Assume that $n$ is a squarefree number such that every prime divisor of $n$ is at least 7 and $p$ is a prime divisor of $n$. For a sequence $S$ in $\Z_n$ we have proved the following.  

\begin{itemize}
\item 
Suppose $\Omega(n)\neq 1$. Then $S$ is a $C$-extremal sequence for $S(n)$ if and only if $S$ is a $C$-extremal sequence for $U(n)$. 

\item 
Suppose $\Omega(n)\neq 1,2$. Then $S$ is a $D$-extremal sequence for $S(n)$ if and only if $S$ is a $D$-extremal sequence for $U(n)$.

\item 
Suppose $\Omega(n)\neq 2$. Then $S$ is a $C$-extremal sequence for $L(n;p)$ if and only if $S$ is a $C$-extremal sequence for $U(n)$.

\item 
Suppose $\Omega(n)\neq 2,3$. Then $S$ is a $D$-extremal sequence for $L(n;p)$ if and only if $S$ is a $D$-extremal sequence for $U(n)$.
\end{itemize}

\begin{rem}
When $n$ is odd, the $D$-extremal sequences for $U(n)$ have been characterized in Theorem 6 of \cite{AMP} and the $C$-extremal sequences for $U(n)$ have been characterized in Theorems 5 and 6 of \cite{SKS2}. 
\end{rem}

The next two results are given in \cite{SKS1} and \cite{SKS3}. 

\begin{thm}\label{un}
For $n$ odd, we have $D_{U(n)}(n)=\Omega(n)+1$ and $C_{U(n)}(n)=2^{\Omega(n)}$. 
\end{thm}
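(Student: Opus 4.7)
The plan is to first reduce both parts of the theorem to a counting problem via the Chinese Remainder Theorem, and then handle the $D$ and $C$ constants by, respectively, an $\mathbb{F}_2$-linear-dependence argument and a partial-sum pigeonhole plus a recursive construction.

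Let $p_1,\ldots,p_r$ be the prime divisors of $n$ where $r=\Omega(n)$. Since $n$ is squarefree and odd, CRT yields $\Z_n \cong \prod_{i=1}^r \Z_{p_i}$ and $U(n) \cong \prod_{i=1}^r U(p_i)$, so $U(n)$-weights can be chosen independently in each coordinate. A direct check in each $\Z_{p_i}$ shows that for a subset $I$ there exist $u_j\in U(n)$ with $\sum_{j\in I}u_j s_j\equiv 0\pmod n$ if and only if, for every $i$, the quantity $|\{j\in I:p_i\nmid s_j\}|$ is not equal to $1$. I will then encode each $s_j$ as a vector $v_j\in\mathbb{F}_2^r$ with $(v_j)_i=1$ iff $p_i\nmid s_j$, and use that $\sum_{j\in I}v_j=0$ in $\mathbb{F}_2^r$ is a sufficient condition (each coordinate count is even and hence different from $1$).

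For $D_{U(n)}(n)$: any $r+1$ vectors in $\mathbb{F}_2^r$ are linearly dependent, providing a non-empty $I$ with $\sum_{j\in I}v_j=0$, and hence a weighted zero-sum subsequence; this gives $D_{U(n)}(n)\le r+1$. The matching lower bound comes from the sequence $(n/p_1,\ldots,n/p_r)$, whose reduction mod $p_i$ forces $i\notin I$ for any putative weighted zero-sum. For $C_{U(n)}(n)\le 2^r$ I consider the partial sums $w_m=\sum_{j=1}^m v_j\in\mathbb{F}_2^r$. A sequence of length $2^r$ produces $2^r+1$ such partial sums, so some $w_a=w_b$ with $a<b$, and the interval $[a+1,b]$ gives an $\mathbb{F}_2$-zero sum of the $v_j$, hence a consecutive $U(n)$-weighted zero-sum. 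For the lower bound I build a $C$-extremal sequence $T_r$ of length $2^r-1$ inductively: take $T_1=(1)$, and set $T_r=(T_{r-1}',\,n/p_r,\,T_{r-1}')$, where $T_{r-1}'$ is the lift of $T_{r-1}$ to $\Z_n$ obtained by using CRT to impose the value $0$ mod $p_r$ and the original value mod $n/p_r$. Intervals lying wholly in one copy of $T_{r-1}'$ would descend to a consecutive weighted zero-sum in the smaller problem, contradicting the induction hypothesis; intervals containing the middle entry $n/p_r$ have exactly one term that is a unit mod $p_r$, so the criterion from the first paragraph blocks them.

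The main obstacle I expect is the recursive lower bound for $C_{U(n)}(n)$: the construction must ensure that the new middle element is the unique unit modulo $p_r$ while every other entry is $0$ mod $p_r$, and the inductive hypothesis has to transport cleanly through both halves via the CRT lift. The other three bounds collapse quickly once the $\mathbb{F}_2$-reformulation in the second paragraph is in place.
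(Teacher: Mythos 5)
The paper does not prove this theorem; it is imported verbatim from \cite{SKS1} and \cite{SKS3}, so there is no in-paper proof to compare against. Taken on its own terms, your argument is sound and complete \emph{for squarefree odd $n$}: the CRT splitting of $U(n)$, the criterion that a subsequence indexed by $I$ admits a unit-weighted zero sum iff no prime $p_i$ has exactly one term of $I$ coprime to it (valid since $|U(p_i)|\geq 2$ lets you perturb one weight when the naive sum of the nonzero residues vanishes), the $\mathbb{F}_2^r$ linear-dependence and partial-sum pigeonhole arguments for the upper bounds, and the sequence $(n/p_1,\ldots,n/p_r)$ and the recursive doubling construction $T_r=(T_{r-1}',\,n/p_r,\,T_{r-1}')$ for the lower bounds all check out. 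Since the present paper defines $\Omega(n)$ only for squarefree $n$ and only ever applies Theorem \ref{un} to squarefree moduli, your proof covers everything that is actually used.

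The one caveat: the theorem as stated says ``for $n$ odd,'' and in the cited sources $\Omega(n)$ counts prime divisors \emph{with multiplicity}, so the result is claimed for non-squarefree $n$ as well. There your reduction genuinely breaks: in $\Z_{p^a}$ with $a\geq 2$ the existence of a unit-weighted zero sum is not governed by which terms are coprime to $p$ (the single term $p$ in $\Z_{p^2}$ has coordinate count $0\neq 1$ under your encoding, yet $up\not\equiv 0 \pmod{p^2}$ for every unit $u$), the extremal $D$-sequence becomes $(1,p,\ldots,p^{a-1})$, and one needs a $p$-adic valuation filtration rather than a single $\mathbb{F}_2$ coordinate per prime. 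If you intend to prove the theorem in the generality in which it is stated, that case requires a separate argument; if you restrict to squarefree $n$, you should say so explicitly at the outset.
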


\begin{thm}\label{qp}
For an odd prime $p$, we have $D_{Q_p}(p)=C_{Q_p}(p)=3$. 
\end{thm}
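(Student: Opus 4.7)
My plan is to prove the two equalities together by showing $D_{Q_p}(p)\ge 3$ and $C_{Q_p}(p)\le 3$ separately; combined with the trivial inequality $D_{Q_p}(p)\le C_{Q_p}(p)$, this forces both constants to equal $3$.

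For the lower bound I would exhibit an explicit length-$2$ witness. Since $q\cdot x\ne 0$ for every $q\in Q_p$ and every $x\in U(p)$, no single-term $Q_p$-weighted zero-sum exists; and a pair $(x,y)\in U(p)^2$ has a $Q_p$-weighted zero-sum iff $-y/x\in Q_p$. Choose $\alpha\in U(p)$ so that $-\alpha\notin Q_p$ (take $\alpha=1$ when $p\equiv 3\pmod 4$, and any non-square otherwise); then $(1,\alpha)$ has no $Q_p$-weighted zero-sum subsequence, giving $D_{Q_p}(p)\ge 3$ and hence also $C_{Q_p}(p)\ge 3$.

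For the upper bound, let $(a,b,c)$ be any length-$3$ sequence in $\Z_p$. If some term is $0$, or if $-ab\in Q_p$ or $-bc\in Q_p$, an obvious consecutive subsequence of length at most $2$ is a $Q_p$-weighted zero-sum. In the remaining case both $-ab$ and $-bc$ are non-squares, and dividing through by $a$ reduces the task to the following key claim: for every choice of signs $\varepsilon_1,\varepsilon_2,\varepsilon_3\in\{\pm 1\}$ there exist $y_1,y_2,y_3\in U(p)$ with $\bigl(\tfrac{y_i}{p}\bigr)=\varepsilon_i$ and $y_1+y_2+y_3=0$. I would prove this by a standard character-sum computation with the Legendre symbol $\chi$: expand the relevant indicator into eight pieces and invoke both $\sum_{y\ne 0}\chi(y)=0$ and the Jacobi-sum identity $J(\chi,\chi)=-\chi(-1)$. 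This yields a main term of order $p^2/8$ against $O(p)$ corrections, which is positive for $p\ge 7$; the sporadic primes $p=3$ (where the statement reduces to the classical fact $D(\Z_3)=3$) and $p=5$ are handled by direct inspection.

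The main obstacle is confirming positivity of the character-sum count uniformly across the eight sign patterns: the sign of the $O(p)$ correction depends on both $\chi(-1)$ and $\varepsilon_1\varepsilon_2\varepsilon_3$, so a handful of sub-cases have to be tracked, and the two smallest primes must be verified separately since the main and error terms are then comparable. Once this positivity check is in hand, the two inequalities combine to give $D_{Q_p}(p)=C_{Q_p}(p)=3$.
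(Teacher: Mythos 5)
This theorem is one of the two results the paper imports without proof (``The next two results are given in \cite{SKS1} and \cite{SKS3}''), so there is no in-paper argument to measure you against; I can only assess your proposal on its own terms, and it is essentially correct and is the standard route to this statement. Your lower bound is fine: $(1,\alpha)$ with $-\alpha\notin Q_p$ kills $D_{Q_p}(p)\le 2$, and $D_{Q_p}(p)\le C_{Q_p}(p)$ closes the loop. For the upper bound, your key claim is also correct, and in fact the count comes out exactly: the number of solutions of $y_1+y_2+y_3=0$ with prescribed signs $\chi(y_i)=\varepsilon_i$ equals $\tfrac{p-1}{8}\bigl(p-2-\chi(-1)(\varepsilon_1\varepsilon_2+\varepsilon_1\varepsilon_3+\varepsilon_2\varepsilon_3)\bigr)$ --- the cubic term $\sum\chi(y_1y_2y_3)$ vanishes identically after the substitution $y_2=sy_1$, so no Jacobi-sum bound is even needed there, only the easy evaluation of the $|S|=2$ terms. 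Since $\varepsilon_1\varepsilon_2+\varepsilon_1\varepsilon_3+\varepsilon_2\varepsilon_3\in\{3,-1\}$, the worst case is $\tfrac{(p-1)(p-5)}{8}$, confirming your claim that positivity holds uniformly for $p\ge 7$ and pinning down exactly why $p=5$ is sporadic (the all-equal sign pattern fails). One small refinement worth recording for the $p=5$ check: the all-equal pattern never actually arises in your reduction, because reaching the key claim requires $-ab$ and $-bc$ to be non-residues, which for $p\equiv 1\pmod 4$ forces the alternating pattern $(\varepsilon,-\varepsilon,\varepsilon)$; so $p=5$ goes through without any genuine case analysis. With those details filled in, your plan yields a complete proof.
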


Let $m$ be a divisor of $n$. We will refer to the homomorphism $f_{n\mid m}:\Z_n\to \Z_m$ given by $a+n\Z\mapsto a+m\Z$ as the natural map. Clearly this map is onto. As the image of $U(n)$ under $f_{n|m}$ is in $U(m)$, we get a map $U(n)\to U(m)$ which we will also refer to as the natural map and denote by $f_{n|m}$.
 
Let $S=(x_1,\ldots,x_l)$ be a sequence in $\Z_n$ and let $p$ be a prime divisor of $n$ such that $v_p(n)=r$. Let us denote the image of $x\in \Z_n$ under $f_{n|p^r}$ by $x^{(p)}$ and let $S^{(p)}$ denote the sequence $(x_1^{(p)},\ldots,x_l^{(p)})$ in $\Z_{p^r}$.

\section{Some results about the weight-set $S(n)$}\label{s}

From this point onwards, we will always assume that $n$ is odd. For $x\in\Z_n$ the Jacobi symbol $\big(\frac{x}{n}\big)$ has been defined in \cite{SKS3}. 

\begin{defn}\label{jacobi}
$S(n)=\big\{\,x\in U(n):\big(\frac{x}{n}\big)=1\,\big\}$ where $\big(\frac{x}{n}\big)$ is the Jacobi symbol.  
\end{defn}

\begin{rem}
Considering $S(n)$ as a weight set was suggested in Section 3 of \cite{ADU}. The constants $D_{S(n)}(n)$ and $C_{S(n)}(n)$ were computed in \cite{SKS3}. From Proposition 1 of \cite{SKS3} we see that $S(n)=U(n)$ if $n$ is a square and  $S(n)$ is a subgroup of index two in $U(n)$, otherwise. When $p$ is an odd prime, it follows that $S(p)=Q_p$. 
\end{rem}

We now state some results from \cite{SKS3} which will be used in the proofs in the next section. The next four results are Lemmas 3, 4, 8 and 9 of \cite{SKS3}.

\begin{lem}\label{u2s}
Let $d$ be a proper divisor of $n$ which is not a square. Let $n'=n/d$ and suppose that $d$ is coprime with $n'$. Then $U(n')\su f_{n|n'}\big(S(n)\big)$.
\end{lem}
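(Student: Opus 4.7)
The plan is to use the Chinese Remainder Theorem to decompose the problem according to the coprime factorization $n=d\cdot n'$, and then exploit the multiplicativity of the Jacobi symbol. Given $y\in U(n')$, I want to produce an $x\in U(n)$ such that $f_{n|n'}(x)=y$ and $\bigl(\frac{x}{n}\bigr)=1$. Since $\gcd(d,n')=1$, the CRT gives an isomorphism $U(n)\cong U(d)\times U(n')$, so for any $z\in U(d)$ there is a unique $x\in U(n)$ with $x\equiv z\pmod d$ and $x\equiv y\pmod{n'}$. The freedom to choose $z$ is what I intend to use.

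Next I would invoke the multiplicativity of the Jacobi symbol together with the assumption $\gcd(d,n')=1$ to write
\begin{equation*}
\Bigl(\tfrac{x}{n}\Bigr)=\Bigl(\tfrac{x}{d}\Bigr)\Bigl(\tfrac{x}{n'}\Bigr)=\Bigl(\tfrac{z}{d}\Bigr)\Bigl(\tfrac{y}{n'}\Bigr).
\end{equation*}
The factor $\bigl(\frac{y}{n'}\bigr)\in\{1,-1\}$ is fixed once $y$ is given. If it equals $1$, choose $z=1$ and we are done. If it equals $-1$, we need to produce a $z\in U(d)$ with $\bigl(\frac{z}{d}\bigr)=-1$.

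The single nontrivial step is the existence of such a $z$, and this is where the hypothesis that $d$ is not a square is used. By the characterization of $S(d)$ recalled in the remark after Definition \ref{jacobi} (Proposition 1 of \cite{SKS3}), when $d$ is not a square $S(d)$ is a proper subgroup of $U(d)$ of index two, so the Jacobi-symbol map $U(d)\to\{\pm1\}$ is surjective and an element $z$ with $\bigl(\frac{z}{d}\bigr)=-1$ exists. Setting $x$ to be the CRT-lift of $(z,y)$ then gives $x\in S(n)$ mapping to $y$ under $f_{n|n'}$, which is exactly what is required. I do not foresee any real obstacle; the proof essentially amounts to assembling CRT, multiplicativity of the Jacobi symbol, and the index-two property of $S(d)$ in $U(d)$ for non-square $d$.
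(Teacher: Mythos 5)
Your proof is correct and is essentially the same argument as the source: the paper only cites this as Lemma 3 of \cite{SKS3} without reproducing the proof, and the proof there is exactly this combination of the CRT splitting $U(n)\cong U(d)\times U(n')$, multiplicativity of the Jacobi symbol in the denominator, and surjectivity of $z\mapsto\bigl(\frac{z}{d}\bigr)$ onto $\{\pm 1\}$ for non-square $d$. You correctly identify the non-square hypothesis as the one load-bearing assumption, so there is nothing to add.
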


\begin{lem}\label{lifts'}
Let $S$ be a sequence in $\Z_n$ and let $d$ be a proper divisor of $n$ which divides every element of $S$. Let $m=n/d$ and suppose that $d$ is coprime with $m$. Let $S'$ be the image of $S$ under $f_{n|m}$. Let $A\su\Z_n$ and $B\su f_{n|m}(A)$. Suppose $S'$ is a $B$-weighted zero-sum sequence. Then $S$ is an $A$-weighted zero-sum sequence.  
\end{lem}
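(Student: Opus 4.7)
The plan is to lift the witnesses for $S'$ back to $\Z_n$ and then invoke the Chinese Remainder Theorem. By hypothesis there exist $b_1,\ldots,b_l\in B$ such that $\sum_i b_i\,x_i'=0$ in $\Z_m$, where $x_i'=f_{n|m}(x_i)$. Since $B\su f_{n|m}(A)$, for each $i$ I can pick $a_i\in A$ with $f_{n|m}(a_i)=b_i$. I would then aim to show that these $a_i$ witness $S$ being an $A$-weighted zero-sum sequence, i.e., that $T:=\sum_i a_i x_i=0$ in $\Z_n$.

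The key observation is that $n=dm$ with $\gcd(d,m)=1$, so $T$ vanishes in $\Z_n$ iff its reductions modulo $d$ and modulo $m$ both vanish. For the reduction modulo $d$, since $d$ divides $x_i$ in $\Z_n$ (in the sense that a representative of $x_i$ is divisible by $d$, which is well-defined as $d\mid n$), every summand $a_i x_i$ is divisible by $d$, so $T\equiv 0\pmod d$. For the reduction modulo $m$, applying the homomorphism $f_{n|m}$ term by term gives $f_{n|m}(T)=\sum_i f_{n|m}(a_i)\,f_{n|m}(x_i)=\sum_i b_i\,x_i'$, which is $0$ in $\Z_m$ by the choice of the $b_i$. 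Combining the two via CRT yields $T=0$ in $\Z_n$, as desired.

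There is no real obstacle here: the argument is a direct CRT computation, and the coprimality assumption $\gcd(d,m)=1$ is exactly what makes the lift succeed. The hypothesis $B\su f_{n|m}(A)$ is used only once, to produce the preimages $a_i$; without it, we would have reduced weights in $\Z_m$ but no weights in $A$ to attach to the $x_i$. The mild point to keep in mind is that $f_{n|m}$ is a ring homomorphism, so it respects the products $a_i x_i$, and that the hypothesis "$d$ divides every element of $S$" is meaningful because $d$ is a divisor of $n$.
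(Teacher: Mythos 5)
Your argument is correct and complete: the lemma is quoted in this paper from Lemma 4 of \cite{SKS3} without proof, and the CRT decomposition $\Z_n\cong\Z_d\times\Z_m$ together with lifting the weights $b_i$ to preimages $a_i\in A$ is exactly the intended (and essentially the only natural) route. Your remark that ``$d$ divides $x_i$'' is well-defined because $d\mid n$ correctly addresses the one point that needs care, so nothing is missing.
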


\begin{lem}\label{gs}
Let $n$ be an odd, squarefree number and $S$ be a sequence in $\Z_n$ such that for every prime divisor $p$ of $n$ at least two terms of $S$ are coprime to $p$. Suppose at most one term of $S$ is a unit. Then $S$ is an $S(n)$-weighted zero-sum sequence. 
\end{lem}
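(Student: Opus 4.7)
The plan is to use the Chinese Remainder Theorem to reduce to working one prime at a time. Since $n$ is odd and squarefree, write $n=p_1\cdots p_k$ with each $p_i$ an odd prime; if $k=1$ the hypotheses contradict each other (two terms coprime to $p_1$ would be two units), so assume $k\geq 2$. Under the natural isomorphisms $\Z_n\cong\prod_i\Z_{p_i}$ and $U(n)\cong\prod_iU(p_i)$, the Jacobi symbol factors as $\big(\frac{a}{n}\big)=\prod_i\big(\frac{a_i}{p_i}\big)$, where $a_i$ denotes the $p_i$-component of $a$. Writing $S=(x_1,\dots,x_m)$, the goal is to choose weights $a_j\in S(n)$ with $\sum_j a_j x_j\equiv 0\pmod n$; via CRT this is equivalent to $\sum_{j\in Y_i}(a_j)_i(x_j)_i\equiv 0\pmod{p_i}$ for every $i$, where $Y_i=\{\,j:p_i\nmid x_j\,\}$, and the second hypothesis gives $|Y_i|\geq 2$.

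First, I realize the $i$-th congruence for each $i$. Pick two indices $\alpha,\beta\in Y_i$, assign $(a_j)_i=1$ for the remaining $j\in Y_i$, and solve for $(a_\alpha)_i,(a_\beta)_i\in U(p_i)$: since $p_i\geq 3$, a free choice of $(a_\alpha)_i$ avoids the single bad value that would make $(a_\beta)_i=0$. If a unit term $x_1=u$ is present, I commit to $a_1=1$ by declaring $(a_1)_i=1$ for every $i$: when $|Y_i|\geq 3$ this fits the default by picking $\alpha,\beta\in Y_i\setminus\{1\}$; when $Y_i=\{1,\beta\}$ I take $\alpha=1$, and the forced value $(a_\beta)_i=-u_i/(x_\beta)_i$ is automatically a unit. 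Second, for each non-unit term $x_j$ in $S$, some prime $p_{i_0}$ of $n$ divides $x_j$; the component $(a_j)_{i_0}$ plays no role in any zero-sum congruence, so I pick it to be a square or non-square in $U(p_{i_0})$ so as to force $\big(\frac{a_j}{n}\big)=1$. (If $x_1=u$ is a unit, then $a_1=1\in S(n)$ already has Jacobi symbol $1$.)

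The main obstacle is that the zero-sum congruences pin down many components of each weight, leaving no room to correct its Jacobi symbol. For a non-unit term this is harmless, since a prime of $n$ dividing $x_j$ supplies a free component; for a unit term, however, every component is constrained by a congruence, which is exactly why the hypothesis restricts to at most one unit. The workaround is to commit to $a_1=1$ up front, giving the unit weight a free $S(n)$-pass. With all components thereby chosen, CRT reassembles them into weights $a_j\in S(n)$ with $\sum_j a_j x_j\equiv 0\pmod n$, so $S$ is an $S(n)$-weighted zero-sum sequence.
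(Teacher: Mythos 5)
This lemma is quoted from \cite{SKS3} (it is Lemma 8 there) and is not proved in the present paper, so there is no in-paper proof to compare your attempt against; judged on its own, your CRT argument is correct and is the natural route: the zero-sum congruences constrain only the components $(a_j)_i$ with $p_i\nmid x_j$, so every non-unit term retains at least one free component with which to normalize its Jacobi symbol, and the at most one unit term is handled by committing its weight to $1\in S(n)$ at the outset, which still leaves each congruence solvable in units because at most one value of the remaining free component is bad and $|U(p_i)|=p_i-1\geq 2$. The only loose end is cosmetic: for a non-unit $x_j$ divisible by more than one prime you explicitly assign only one of the corresponding free components, so you should set the remaining unassigned components $(a_j)_i$ (for the other primes $p_i$ dividing $x_j$) to $1$ before using the designated component $(a_j)_{i_0}$ to force $\big(\frac{a_j}{n}\big)=1$.
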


\begin{lem}\label{gs'}
Let $n$ be a squarefree number whose every prime divisor is at least 7. Let $S$ be a sequence in $\Z_n$ such that for every prime divisor $p$ of $n$ at least two terms of $S$ are coprime to $p$. Suppose there is a prime divisor $p$ of $n$ such that at least three terms of $S$ are coprime to $p$. Then $S$ is an $S(n)$-weighted zero-sum sequence.
\end{lem}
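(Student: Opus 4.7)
The plan is to construct, via the Chinese Remainder Theorem, weights $a_i\in S(n)$ with $\sum_i a_i x_i = 0$ in $\Z_n$. Writing $a_i^{(p)}\in U(p)$ for the image of $a_i$ under $f_{n\mid p}$, the task becomes choosing the $a_i^{(p)}$ so that (i) $\sum_{i\in T_p} a_i^{(p)} x_i^{(p)} = 0$ in $\Z_p$ at every prime $p\mid n$, and (ii) $\prod_{p\mid n}\big(\frac{a_i^{(p)}}{p}\big) = 1$ for every $i$, which is what guarantees $a_i\in S(n)$.

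A useful observation is that for any non-unit term $x_i$ there exists a prime $q\mid n$ with $q\mid x_i$, i.e.\ $i\notin T_q$; at this prime $a_i^{(q)}$ does not appear in (i) and is free in $U(q)$, so the Legendre symbol $\big(\frac{a_i^{(q)}}{q}\big)$ can be flipped to repair (ii) for such $i$. Hence (ii) is a genuine constraint only for the unit indices $U=\{\,i:x_i\in U(n)\,\}$, each of which lies in $T_p$ for every $p\mid n$.

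With $p_0$ the distinguished prime satisfying $|T_{p_0}|\geq 3$, I would proceed as follows. At each prime $p\neq p_0$, pick any $(a_i^{(p)})_{i\in T_p}$ satisfying (i) at $p$; this is possible since $|T_p|\geq 2$. Record $c_i := \prod_{p\neq p_0}\big(\frac{a_i^{(p)}}{p}\big)$ for every $i\in U$. Next, at $p_0$, invoke the sub-claim below to produce $(a_i^{(p_0)})_{i\in T_{p_0}}$ with $\sum_{i\in T_{p_0}} a_i^{(p_0)} x_i^{(p_0)} = 0$ in $\Z_{p_0}$ and $\big(\frac{a_i^{(p_0)}}{p_0}\big) = c_i$ for each $i\in U$ (the signs at indices in $T_{p_0}\setminus U$ being free). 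Finally, for each non-unit $i$, pick a prime $q\mid x_i$ and adjust $a_i^{(q)}$ to enforce (ii); reassembling by CRT yields the required $a_i\in S(n)$.

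The main obstacle is the following sub-claim, stated in the slightly stronger form convenient for the inductive step: for a prime $p\geq 7$, an integer $t\geq 3$, elements $x_1,\dots,x_t\in U(p)$, signs $\delta_1,\dots,\delta_t\in\{\pm 1\}$, and any $c\in\Z_p$, there exist $a_1,\dots,a_t\in U(p)$ with $\big(\frac{a_i}{p}\big)=\delta_i$ and $\sum a_i x_i = c$ in $\Z_p$. Substituting $y_i=a_i x_i$ reduces the claim to exhibiting $y_i\in U(p)$ with prescribed Legendre symbols summing to $c$. For the base case $t=3$ one checks directly that for $p\geq 7$ every combination of sign pattern and target $c$ is realisable; in $\Z_7$, for example, $1+2+4\equiv 0$ handles $(+,+,+)$ with $c=0$, $3+5+6\equiv 0$ handles $(-,-,-)$ with $c=0$, $2+2+4\equiv 1$ handles $(+,+,+)$ with $c=1$, and the remaining patterns follow by permutation and by the scaling $y_i\mapsto y_i/c$ for $c\neq 0$. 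For $t>3$ one fixes $a_t$ with sign $\delta_t$ and reduces to $t-1$ variables with target $c-a_t x_t$, inducting on $t$. The hypothesis $p\geq 7$ is sharp here, as witnessed by $\Z_5$, in which no three non-zero quadratic residues sum to zero.
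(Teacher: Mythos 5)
This lemma is not proved in the paper at all: it is quoted verbatim as Lemma 9 of \cite{SKS3}, so there is no in-paper argument to measure yours against. Judged on its own terms, your architecture is sound and is surely the natural one: reduce via CRT to choosing components $a_i^{(p)}\in U(p)$ subject to one linear condition at each prime and the global sign condition $\prod_{p\mid n}\big(\tfrac{a_i^{(p)}}{p}\big)=1$; note that the sign condition is vacuous at non-unit indices, since a component $a_i^{(q)}$ at a prime $q\mid x_i$ multiplies $x_i^{(q)}=0$ and can therefore be flipped at the end without disturbing any sum; solve the linear condition arbitrarily at every prime except the distinguished $p_0$ with at least three coprime terms; and spend the extra degree of freedom at $p_0$ to hit the signs $c_i$ forced at the unit indices. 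All the bookkeeping checks out, including the containment $U\su T_{p_0}$, the case where $n$ is prime (empty product, all $c_i=1$), and the two-term solvability at the primes with $|T_p|=2$.

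The one step you have not actually proved is the base case $t=3$ of your sub-claim for a \emph{general} prime $p\ge 7$: you verify it only in $\Z_7$ and then assert that "one checks directly," but there are infinitely many primes, so a uniform argument is required. It is short and you should supply it. For $c\ne 0$ and prescribed signs $\epsilon_1,\epsilon_2\in\{\pm1\}$, writing $\chi$ for the Legendre symbol, the standard computation gives
\[
\#\{(y_1,y_2):y_1+y_2=c,\ \chi(y_1)=\epsilon_1,\ \chi(y_2)=\epsilon_2\}
=\tfrac14\big(p-2-(\epsilon_1+\epsilon_2)\chi(c)-\epsilon_1\epsilon_2\chi(-1)\big)\ \ge\ \tfrac{p-5}{4}>0
\]
for $p\ge 7$. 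Then for three terms and an arbitrary target $c$ (including $c=0$), choose $y_3$ in its prescribed class with $y_3\ne c$ --- possible since each class has $(p-1)/2\ge 3$ elements --- and represent $c-y_3\ne 0$ as a two-term sum with the remaining prescribed signs. With that paragraph inserted, your proof is complete, and your closing observation that $p=5$ fails (no three nonzero squares mod $5$ sum to $0$) correctly explains why the hypothesis "every prime divisor is at least $7$" is needed.
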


The next result follows from Theorems 3 and 4 of \cite{SKS3}.

\begin{thm}\label{sn}
When $n$ is a prime, we have $D_{S(n)}(n)=C_{S(n)}(n)=3$. When $n$ is not a prime and is a squarefree number whose every prime divisor is at least 7, we have $D_{S(n)}(n)=\Omega(n)+1$ and $C_{S(n)}(n)=2^{\Omega(n)}$. 
\end{thm}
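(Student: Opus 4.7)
The plan is to split the statement into two disjoint cases, corresponding to $n$ prime and $n$ composite, since the two assertions use different techniques.

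For the prime case, the argument is essentially a one-line reduction. By the remark following Definition~\ref{jacobi} we have $S(p) = Q_p$ for every odd prime $p$, so $D_{S(n)}(n) = D_{Q_n}(n)$ and $C_{S(n)}(n) = C_{Q_n}(n)$, and Theorem~\ref{qp} yields the common value $3$.

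For the composite case I would establish matching lower and upper bounds for each of the two constants. The lower bounds come for free from the inclusion $S(n) \su U(n)$: every $S(n)$-weighted zero-sum (consecutive) subsequence is automatically a $U(n)$-weighted one, so any extremal sequence for $U(n)$ in the sense of Theorem~\ref{un} is $S(n)$-zero-sum free. This gives $D_{S(n)}(n) \geq \Omega(n)+1$ and $C_{S(n)}(n) \geq 2^{\Omega(n)}$. The upper bounds are the substantive part, and the key tools are Lemmas~\ref{gs} and~\ref{gs'}. To show $D_{S(n)}(n) \leq \Omega(n)+1$, I would take an arbitrary sequence $S$ of length $\Omega(n)+1$ and search for a subsequence meeting the hypotheses of one of these lemmas. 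If the coprimality condition in those lemmas fails for some prime $p \mid n$, then $p$ divides at least $\Omega(n)$ terms of $S$; I would strip the factor $p$, induct on $\Omega(n)$ to find a $U(n/p)$-weighted zero-sum in the image under $f_{n\mid n/p}$, and then use Lemma~\ref{u2s} to promote the weights into $S(n)$ combined with Lemma~\ref{lifts'} to pull the zero-sum back to $\Z_n$. If the coprimality condition does hold, either at most one term is a unit (apply Lemma~\ref{gs}) or some prime has at least three coprime terms (apply Lemma~\ref{gs'}), and in both cases $S$ itself is an $S(n)$-weighted zero-sum. The bound $C_{S(n)}(n) \leq 2^{\Omega(n)}$ runs in parallel, but must produce the zero-sum on a consecutive block; the expected route is to induct on $\Omega(n)$ by passing through $\Z_{n/p}$, and then apply Lemma~\ref{gs'} to a carefully chosen consecutive block of length at least three.

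The main obstacle, in my view, is the $C$-upper bound in the composite case. Enforcing an $S(n)$-weighted zero-sum on a \emph{consecutive} subsequence is considerably more constrained than on an arbitrary subsequence, because one cannot freely choose which indices participate; one must identify a consecutive block that already has the combinatorial structure required by Lemma~\ref{gs'}. The hypothesis that every prime factor of $n$ is at least $7$ is exactly what makes Lemma~\ref{gs'} applicable, so the argument is sensitive to both the length $2^{\Omega(n)}$ and the distributional data of the sequence across the prime divisors of $n$.
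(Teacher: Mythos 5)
First, a point of comparison: the paper does not actually prove this statement; it is imported wholesale with the remark that it ``follows from Theorems 3 and 4 of \cite{SKS3}.'' So your blind attempt is being measured against a citation, not an argument. That said, your reconstruction uses exactly the toolkit the paper quotes from \cite{SKS3} (Lemmas \ref{u2s}, \ref{lifts'}, \ref{gs}, \ref{gs'} together with Theorems \ref{un} and \ref{qp}), and the skeleton is right: the prime case is indeed the one-line reduction $S(p)=Q_p$ plus Theorem \ref{qp}; the lower bounds do come for free from $S(n)\su U(n)$ and Theorem \ref{un}; and the upper bounds are proved by the dichotomy ``some prime divides almost every term (strip $p$, use $U(n/p)$, lift via Lemmas \ref{u2s} and \ref{lifts'})'' versus ``every prime has at least two coprime terms (apply Lemma \ref{gs} or \ref{gs'} to the whole sequence).'' Two small inaccuracies: no induction on $\Omega(n)$ is needed in the stripping step, since Theorem \ref{un} applies directly to $n/p$; and in the second branch the lemmas are applied to the \emph{entire} sequence, which is trivially a block of consecutive terms, so the ``consecutive'' difficulty you worry about for $C_{S(n)}(n)$ lives entirely in the stripping case (where the correct move is to split the length-$2^{\Omega(n)}$ sequence around the unique term coprime to $p$ into two halves, one of which has length $2^{\Omega(n)-1}=C_{U(n/p)}(n/p)$ and all terms divisible by $p$), not in a ``carefully chosen block of length at least three.''

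The one genuine gap is that your case split in the upper bound is not exhaustive. After reducing to the situation where every prime divisor of $n$ has at least two coprime terms, you claim that either at most one term is a unit (Lemma \ref{gs}) or some prime has at least three coprime terms (Lemma \ref{gs'}). There is a third configuration: at least two terms are units and every prime divisor has \emph{exactly} two coprime terms. Then the two units account for all coprime terms at every prime, so every remaining term is divisible by every prime divisor of $n$, i.e.\ equals $0$; since the sequences under consideration have length $\Omega(n)+1\geq 3$ or $2^{\Omega(n)}\geq 4$, such a term exists, and the singleton $(0)$ is itself an $S(n)$-weighted zero-sum subsequence of consecutive terms. (This is precisely how the paper disposes of the analogous configuration in the proofs of Theorems \ref{dexts} and \ref{cexts}, where the nonvanishing of the terms is instead used to derive a contradiction.) Your sketch never mentions zero terms at all, and without this observation the argument does not close. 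It is an easy patch, but as written the proof is incomplete.
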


\section{$D$-extremal sequences for $S(n)$}

\begin{thm}\label{dexts}
Let $n$ be squarefree and every prime divisor of $n$ be at least 7. Suppose $S$ is a $D$-extremal sequence for $S(n)$. If $\Omega(n)\geq 3$, then $S$ is a $D$-extremal sequence for $U(n)$. If $\Omega(n)=2$, then $S$ is either a $D$-extremal sequence for $U(n)$ or $S$ is a permutation of a sequence $(x_1,x_2)$ where $x_1\in S(n)$ and $-x_2\in U(n)\setminus S(n)$.
\end{thm}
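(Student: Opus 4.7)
The plan is to argue by contradiction: suppose $S$ is $D$-extremal for $S(n)$ and yet some subsequence $T$ of $S$ is $U(n)$-weighted zero-sum, and then draw enough structural conclusions about $T$ to reach a contradiction (for $\Omega(n)\geq 3$) or to pin down the exact form of $S$ (for $\Omega(n)=2$). First I would observe that no term of $S$ can be $0$, since $(0)$ is trivially an $S(n)$-weighted zero-sum subsequence. Next I would argue that no prime $p\mid n$ divides every term of $T$: if some $p$ did, then the image $T^{(p)}$ in $\Z_{n/p}$ would still be $U(n/p)$-weighted zero-sum (units map to units), and since $p$ is a non-square proper divisor of $n$ coprime to $n/p$, Lemma \ref{u2s} followed by Lemma \ref{lifts'} would lift this to an $S(n)$-weighted zero-sum of $T$, contradicting the $D$-extremality of $S$.

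With these reductions I would analyse coprimality via CRT: a $U(n)$-weighted zero-sum relation $\sum_j u_j t_j=0$ projects modulo each prime $p\mid n$ to a relation involving only the terms of $T$ coprime to $p$, and solvability forces the number of such terms to be either $0$ or at least $2$; combined with the previous step this count is $\geq 2$ for every $p\mid n$. Lemma \ref{gs} then forces $T$ to contain at least two units (else $T$ is itself $S(n)$-weighted zero-sum), and Lemma \ref{gs'} forces the count to be exactly $2$ for each prime (else some prime has $\geq 3$ coprime terms and the same lemma yields an $S(n)$-weighted zero-sum). The two units already saturate the two coprime slots for every prime, so any further term of $T$ would have to be divisible by every prime of $n$ and hence equal $0$, which is impossible. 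Thus $T=(a,b)$ with $a,b\in U(n)$ and $ua+vb=0$ for some $u,v\in U(n)$.

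If $\Omega(n)\geq 3$, I would pick $c\in S$ outside $T$ and apply Lemma \ref{gs'} to $(a,b,c)$: the units $a,b$ give at least two terms coprime to every prime, and $c\neq 0$ together with $n$ squarefree ensures $c$ is coprime to some prime $p_0\mid n$, so three terms are coprime to $p_0$; the lemma then declares $(a,b,c)$ an $S(n)$-weighted zero-sum subsequence of $S$, contradicting $D$-extremality, so $S$ must in fact be $D$-extremal for $U(n)$.

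For $\Omega(n)=2$ we have $S=T=(x_1,x_2)$ with both units and $u_1 x_1+u_2 x_2=0$. I would observe that $\big(\frac{u_1}{n}\big)$ and $\big(\frac{u_2}{n}\big)$ must differ, because otherwise rescaling both weights by a unit $s$ with $\big(\frac{s}{n}\big)=\big(\frac{u_1}{n}\big)$ produces weights in $S(n)$ and an $S(n)$-weighted zero-sum; after swapping the roles of $x_1$ and $x_2$ if necessary, I would take $\big(\frac{u_1}{n}\big)=1$, giving $u_1\in S(n)$ and $u_2\in U(n)\setminus S(n)$, and a direct Jacobi-symbol computation on $u_1 x_1=-u_2 x_2$ would then isolate the form $x_1\in S(n)$, $-x_2\in U(n)\setminus S(n)$. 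The main obstacle I anticipate is this last step: the constraint $\big(\frac{x_1}{n}\big)\big(\frac{x_2}{n}\big)=-\big(\frac{-1}{n}\big)$ admits several sub-cases by the sign of $\big(\frac{-1}{n}\big)$, and the permutation freedom must be used carefully to funnel each into the stated normal form; by contrast, the $\Omega(n)\geq 3$ case closes cleanly with the single application of Lemma \ref{gs'}.
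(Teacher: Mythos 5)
Your argument for $\Omega(n)\geq 3$ is correct and takes a genuinely different route from the paper's. The paper splits into cases according to how many terms of $S$ are coprime to each prime, and in the case where some prime divides all but at most one term it passes to a quotient, identifies a $D$-extremal sequence for $U(n')$, and invokes the external characterization of such sequences from \cite{AMP} to lift extremality back to $\Z_n$. You instead show directly that $S$ admits no $U(n)$-weighted zero-sum subsequence: reducing any putative such subsequence $T$ to a pair of units via Lemmas \ref{u2s}, \ref{lifts'}, \ref{gs} and \ref{gs'}, then adjoining a third nonzero term of $S$ and applying Lemma \ref{gs'} once more, is sound and bypasses the structural result from \cite{AMP} entirely; it also absorbs the paper's three cases into one argument. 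Two small things to make explicit: $S$ has length $\Omega(n)=D_{U(n)}(n)-1$ (from Theorem \ref{sn} together with Theorem \ref{un}), which you need both to find the extra term $c$ and to conclude $D$-extremality for $U(n)$ at the end; and your $T^{(p)}$ clashes with the paper's notation, where that symbol denotes the image in $\Z_{p}$ rather than in $\Z_{n/p}$.

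The genuine gap is exactly where you suspect it, in the $\Omega(n)=2$ endgame, and it is not just a bookkeeping nuisance. You correctly arrive at the constraint $\big(\frac{x_1}{n}\big)\big(\frac{-x_2}{n}\big)=-1$, i.e.\ $\big(\frac{x_1}{n}\big)\big(\frac{x_2}{n}\big)=-\big(\frac{-1}{n}\big)$. When $\big(\frac{-1}{n}\big)=1$ this forces exactly one of $x_1,x_2$ into $S(n)$ and a permutation yields the stated form. But when $\big(\frac{-1}{n}\big)=-1$ the constraint also admits $\big(\frac{x_1}{n}\big)=\big(\frac{x_2}{n}\big)=-1$, and then no permutation has its first term in $S(n)$, so the subcases cannot all be funnelled into the stated normal form. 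Concretely, take $n=91=7\cdot 13$, where $\big(\frac{-1}{91}\big)=-1$ and $\big(\frac{15}{91}\big)=-1$: the sequence $(15,15)$ has no $S(91)$-weighted zero-sum subsequence (a relation $15(a+b)=0$ would force $b=-a$ with $a,-a\in S(91)$, impossible), yet it is not $D$-extremal for $U(91)$ (weights $1$ and $-1$ give a zero sum) and is not of the stated form. The paper's own proof performs this final reduction in a single unjustified sentence and overlooks the same subcase, so you have in fact located a real defect rather than merely failed to finish; the invariant statement one can actually prove here is the symmetric condition $\big(\frac{-x_1x_2}{n}\big)=-1$, not the asymmetric normal form in the theorem.
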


\begin{proof}
Let $S=(x_1,\ldots,x_k)$ be a $D$-extremal sequence for $S(n)$. Then all terms of $S$ are non-zero. As $n$ is squarefree, $\Omega(n)\geq 2$ and every prime divisor of $n$ is at least 7, by Theorem \ref{sn} we have $D_{S(n)}(n)=\Omega(n)+1$. Hence $k$ must be $\Omega(n)$. 

\begin{case}
There is a prime divisor $p$ of $n$ such that at most one term of $S$ is coprime to $p$.
\end{case}

Suppose all terms of $S$ are divisible by $p$. Let $n'=n/p$ and let $S'$ be the image of $S$ under $f_{n|n'}$. By Theorem $\ref{un}$ we have $D_{U(n')}(n')=\Omega(n')+1$. As $S'$ has length $\Omega(n)=\Omega(n')+1$, we see that $S'$ has a $U(n')$-weighted zero-sum subsequence. As $n$ is squarefree, it follows that $p$ is coprime to $n'$ and so by Lemmas \ref{u2s} and \ref{lifts'}, we get the contradiction that $S$ has an $S(n)$-weighted zero-sum subsequence.

So there is exactly one term of $S$ which is not divisible by $p$. Let us assume that that term is $x_1$. Let $T=(x_2,\ldots,x_k)$ and let $T'$ be the image of $T$ under $f_{n|n'}$. Suppose $T'$ has a $U(n')$-weighted zero-sum subsequence. By Lemmas \ref{u2s} and \ref{lifts'}, we get the contradiction that $S$ has an $S(n)$-weighted zero-sum subsequence. Thus, $T'$ is a sequence of length $\Omega(n')$ in $\Z_{n'}$ which does not have any $U(n')$-weighted zero-sum subsequence. As $D_{U(n')}(n')=\Omega(n')+1$, it follows that $T'$ is a $D$-extremal sequence for $U(n')$. So from Theorem 5 of \cite{AMP} we see that $S$ is a $D$-extremal sequence for $U(n)$.

\begin{case}
For every prime divisor $p$ of $n$, exactly two terms of $S$ are coprime to $p$.
\end{case}

If $\Omega(n)=2$, then $S=(x_1,x_2)$. For every prime divisor $p$ of $n$, as both $x_1$ and $x_2$ are coprime to $p$, it follows that $x_1,x_2\in U(n)$. Suppose either $x_1,-x_2\in S(n)$ or $x_1,-x_2\notin S(n)$. As $n$ is squarefree, from Proposition 1 of \cite{SKS3} we get that $S(n)$ has index two in $U(n)$. Hence, $a\in S(n)$ where  $a=-x_2x_1^{-1}$. As $1\in S(n)$ and $ax_1+x_2=0$, it follows that $S$ is an $S(n)$-weighted zero-sum sequence. Thus, the sequence $S$ is a permutation of a sequence $(x_1,x_2)$ where $x_1\in S(n)$ and $-x_2\in U(n)\setminus S(n)$.
  
Let us now assume that $\Omega(n)\geq 3$. Suppose $S$ has at most one unit. By Lemma \ref{gs} we get the contradiction that $S$ is an $S(n)$-weighted zero-sum sequence. So we can assume that $S$ has at least two units. By the assumption in this subcase, we see that $S$ will have exactly two units and the other terms of $S$ will be zero. As $S$ has length $k\geq 3$, we get the contradiction that some term of $S$ is zero. 

\begin{case}
For every prime divisor $p$ of $n$ at least two terms of $S$ are coprime to $p$, and there is a prime divisor $p$ of $n$ such that at least three terms of $S$ are coprime to $p$.
\end{case}

In this case, by Lemma \ref{gs'} we get the contradiction that $S$ is an $S(n)$-weighted zero-sum sequence. 
\end{proof}

\begin{thm}\label{dexts'}
Let $n$ be a squarefree number which is not a prime and suppose every prime divisor of $n$ is at least 7. When $\Omega(n)=2$, a sequence is a $D$-extremal sequence for $S(n)$ if and only if either it is a $D$-extremal sequence for $U(n)$ or a permutation of a sequence $(x_1,x_2)$ where $x_1\in S(n)$ and $-x_2\in U(n)\setminus S(n)$. When $\Omega(n)\geq 3$, a sequence is $D$-extremal sequence for $S(n)$ if and only if  it is a $D$-extremal sequence for $U(n)$.
\end{thm}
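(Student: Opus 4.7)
The forward direction ($\Rightarrow$) is already contained in Theorem \ref{dexts}, so only the reverse direction requires work, and the approach is a short direct verification.

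My plan is to split the converse into the two cases stated. For both cases when $S$ is a $D$-extremal sequence for $U(n)$, I would argue as follows. By Theorem \ref{un}, $D_{U(n)}(n) = \Omega(n)+1$, and by Theorem \ref{sn} (applicable since $n$ is not prime, squarefree, and every prime divisor is at least $7$), $D_{S(n)}(n) = \Omega(n)+1$ as well. Hence $S$ has length $\Omega(n) = D_{S(n)}(n)-1$, which is the correct length. Since $S(n) \su U(n)$, every $S(n)$-weighted linear combination is a $U(n)$-weighted linear combination; in particular any $S(n)$-weighted zero-sum subsequence of $S$ would be a $U(n)$-weighted zero-sum subsequence, contradicting that $S$ is $D$-extremal for $U(n)$. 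Therefore $S$ is $D$-extremal for $S(n)$. This handles the $\Omega(n)\geq 3$ claim entirely, and one half of the $\Omega(n)=2$ claim.

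The remaining task is the $\Omega(n)=2$ case: a sequence $S = (x_1,x_2)$ with $x_1 \in S(n)$ and $-x_2 \in U(n)\setminus S(n)$ is $D$-extremal for $S(n)$. Its length is $2 = D_{S(n)}(n)-1$, so I only need to rule out $S(n)$-weighted zero-sum subsequences. The singletons $(x_1)$ and $(x_2)$ are not zero-sums because $x_1, x_2 \in U(n)$ and $S(n) \su U(n)$, so $ax_i \neq 0$ for any $a \in S(n)$. For the full sequence, suppose $ax_1 + bx_2 = 0$ with $a,b \in S(n)$. Rewriting gives $a = b\cdot(-x_2 x_1^{-1})$. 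Since $n$ is squarefree and $\Omega(n)=2$, Proposition 1 of \cite{SKS3} tells us $S(n)$ is a subgroup of $U(n)$ of index two. Now $-x_2 \in U(n)\setminus S(n)$ and $x_1^{-1}\in S(n)$, so $-x_2 x_1^{-1} \in U(n)\setminus S(n)$; multiplying by $b\in S(n)$ keeps it in the nontrivial coset, so $a \in U(n)\setminus S(n)$, contradicting $a\in S(n)$.

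There is no serious obstacle: both directions reduce to coset arithmetic in $U(n)/S(n)$ plus the fact that the relevant Davenport constants coincide. The only point of care is using Proposition 1 of \cite{SKS3} to guarantee that $S(n)$ is a subgroup of index two in the $\Omega(n)=2$ case (since $n$ is not a square), so that the coset argument in the last step is valid.
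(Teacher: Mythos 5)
Your proposal is correct and follows essentially the same route as the paper: the equality $D_{U(n)}(n)=D_{S(n)}(n)$ from Theorems \ref{un} and \ref{sn} together with $S(n)\su U(n)$ handles the $U(n)$-extremal case, and the $\Omega(n)=2$ sequence $(x_1,x_2)$ is handled by the same coset computation (the paper writes it as $-x_2=cx_1$ with $c\in S(n)$, forcing the contradiction $-x_2\in S(n)$, which is your argument rearranged). No gaps.
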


\begin{proof}
Let $n$ be as in the statement of the theorem. We have already proved one implication in Theorem \ref{dexts}. From Theorems \ref{un} and \ref{sn} we have $D_{U(n)}(n)=D_{S(n)}(n)$. Hence, as $S(n)\su U(n)$ it follows that if $S$ is a $D$-extremal sequence for $U(n)$, then $S$ is a $D$-extremal sequence for $S(n)$.  

Let $\Omega(n)=2$ and $S=(x_1,x_2)$ where $x_1\in S(n)$ and $-x_2\in U(n)\setminus S(n)$. Suppose $T$ is an $S(n)$-weighted zero-sum subsequence of $S$. Then $T$ must be $S$ itself. So there exist $a,b\in S(n)$ such that $ax_1+bx_2=0$ and hence there exists $c\in S(n)$ such that $-x_2=cx_1$. As both $x_1$ and $c$ are in $S(n)$, we get the contradiction that $-x_2\in S(n)$. Thus $S$ does not have any $S(n)$-weighted zero-sum subsequence. From Theorem \ref{sn}, we have $D_{S(n)}(n)=3$ and so $S$ is a $D$-extremal sequence for $S(n)$. 
\end{proof}

\begin{rem}
When $n$ is a prime $p$ we have $S(n)=Q_p$. From Corollary 2 of \cite{SKS2}, we can see that the $D$-extremal sequences for $Q_p$ are precisely those which are a permutation of a sequence of the form $(x_1,x_2)$ where $x_1\in Q_p$ and $-x_2\in U(p)\setminus Q_p$.
\end{rem}


\section{$C$-extremal sequences for $S(n)$}

\begin{thm}\label{cexts}
Let $n$ be squarefree number which is not a prime and assume that every prime divisor of $n$ is at least 7. Then a sequence  in $\Z_n$ is a $C$-extremal sequence for $S(n)$ if and only if it is a $C$-extremal sequence for $U(n)$. 
\end{thm}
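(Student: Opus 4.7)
The plan is to handle the two directions separately. The forward direction is immediate from $S(n)\su U(n)$ together with the equality $C_{S(n)}(n)=C_{U(n)}(n)=2^{\Omega(n)}$ supplied by Theorems \ref{un} and \ref{sn}: any $C$-extremal sequence for $U(n)$ avoids all $U(n)$-weighted and hence all $S(n)$-weighted consecutive zero-sum subsequences, and already has the correct length.

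For the reverse direction, let $S$ be a $C$-extremal sequence for $S(n)$ of length $k=2^{\Omega(n)}-1\geq 3$, and I would first determine its coarse structure by mimicking the three-case dichotomy from the proof of Theorem \ref{dexts}, applied to $S$ itself. If every prime divisor of $n$ has at least two terms of $S$ coprime to it, then either some such prime has at least three (and Lemma \ref{gs'} makes $S$ an $S(n)$-weighted zero-sum sequence) or $S$ has at most one unit (and Lemma \ref{gs} does the same); either outcome contradicts $C$-extremality, since $S$ is a consecutive subsequence of itself. If instead every prime has exactly two coprime terms and $S$ has at least two units, those two units exhaust the coprime terms for every prime, so every non-unit term of $S$ is divisible by every prime of $n$ and hence zero, which is impossible for $k\geq 3$. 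Thus some prime $p\mid n$ has at most one term of $S$ coprime to $p$.

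Next, the lifting machinery of Lemmas \ref{u2s} and \ref{lifts'} applies with $d=p$, which is not a square and is coprime to $n/p$ since $n$ is squarefree. If every term of $S$ is divisible by $p$, the image $S'$ in $\Z_{n/p}$ has length $2^{\Omega(n)}-1\geq C_{U(n/p)}(n/p)=2^{\Omega(n)-1}$ and therefore contains a consecutive $U(n/p)$-weighted zero-sum subsequence, which lifts to a consecutive $S(n)$-weighted zero-sum subsequence of $S$, a contradiction. Hence exactly one term $x_j$ of $S$ is coprime to $p$. Setting $S_1=(x_1,\ldots,x_{j-1})$ and $S_2=(x_{j+1},\ldots,x_k)$, the same lifting argument prevents either $S_i$ from reaching length $C_{U(n/p)}(n/p)=2^{\Omega(n)-1}$, so $|S_1|,|S_2|\leq 2^{\Omega(n)-1}-1$. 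Since these lengths sum to $k-1=2(2^{\Omega(n)-1}-1)$, equality must hold, and the images $S_1',S_2'$ in $\Z_{n/p}$ are themselves $C$-extremal sequences for $U(n/p)$.

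Finally, I would verify that this structural description forces $S$ to be $C$-extremal for $U(n)$. A consecutive subsequence $T$ of $S$ either misses $x_j$, in which case $T$ lies entirely inside $S_1$ or $S_2$ and any hypothetical $U(n)$-weighted zero-sum identity for $T$ descends through $f_{n|n/p}$ to a $U(n/p)$-weighted zero-sum identity for its image $T'$, contradicting the $C$-extremality of $S_1'$ or $S_2'$ for $U(n/p)$; or $T$ contains $x_j$, in which case reducing any such identity $\sum u_ix_i=0$ modulo $p$ leaves only $u_jx_j\equiv 0\pmod p$, which is absurd since both $u_j$ and $x_j$ are units modulo $p$. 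Combined with $|S|=C_{U(n)}(n)-1$, this establishes $C$-extremality of $S$ for $U(n)$. The main obstacle is the bookkeeping step that pins the unique coprime-to-$p$ term at position $j=2^{\Omega(n)-1}$ and identifies each half as $C$-extremal for the smaller ring $U(n/p)$; once that structural picture is in hand, the final verification is a routine mod-$p$ check.
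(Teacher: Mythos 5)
Your proof is correct and follows essentially the same route as the paper: the same three-way case split via Lemmas \ref{gs} and \ref{gs'}, the same lifting through Lemmas \ref{u2s} and \ref{lifts'} to rule out the case that $p$ divides every term and to force the unique term coprime to $p$ into the middle position, and the same identification of the two halves as $C$-extremal sequences for $U(n/p)$. The only divergence is at the very end: the paper concludes by invoking the characterization of $C$-extremal sequences for $U(n)$ (Theorem 5 of \cite{SKS2}), whereas you verify directly that $S$ has no $U(n)$-weighted zero-sum subsequence of consecutive terms --- descending to $\Z_{n/p}$ for subsequences avoiding $x_j$ and reducing modulo $p$ for those containing it --- which makes that final step self-contained at no real extra cost.
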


\begin{proof}
As $n$ is a squarefree number which is not a prime and every prime divisor of $n$ is at least 7, by Theorems \ref{un} and \ref{sn} we get $C_{U(n)}(n)=C_{S(n)}(n)$. Hence, as $S(n)\su U(n)$, it follows that any $C$-extremal sequence for $U(n)$ is a $C$-extremal sequence for $S(n)$. Suppose a sequence $S=(x_1,\ldots,x_l)$ in $\Z_n$ is a $C$-extremal sequence for $S(n)$. By Theorem \ref{sn} we have $C_{S(n)}(n)=2^{\Omega(n)}$ and so   $l$ must be $2^{\Omega(n)}-1$. Also all the terms of $S$ must be non-zero. 

\begin{case}
There is a prime divisor $p$ of $n$ such that at most one term of $S$ is not divisible by $p$.
\end{case}

Suppose all the terms of $S$ are divisible by $p$. Let $n'=n/p$ and $S'$ be the image of $S$ under $f_{n|n'}$. By Theorem \ref{un} we have $C_{U(n')}(n')=2^{\Omega(n')}$. As $S'$ has length $l=2^{\Omega(n)}-1$ and as $\Omega(n)=\Omega(n')+1$, we get that $l>2^{\Omega(n')}$. Hence, $S'$ has a $U(n')$-weighted zero-sum subsequence of consecutive terms. Thus by Lemmas \ref{u2s} and \ref{lifts'}, we get the contradiction that $S$ has an $S(n)$-weighted zero-sum subsequence of consecutive terms. 

So exactly one term $x^*$ of $S$ is coprime to $p$. Suppose $x^*\neq x_{k+1}$ where $k+1=(l+1)/2$. Then there is a subsequence $T$ of consecutive terms of $S$ of length at least $k+1$ such that $p$ divides every term of $T$. Also $k+1=2^{\Omega(n')}$ as $l+1=2^{\Omega(n)}$. So by a similar argument as in the previous paragraph, we get the contradiction that $T$ (and hence $S$) has an $S(n)$-weighted zero-sum subsequence of consecutive terms. Thus the term $x^*$ must be $x_{k+1}$. 

Let $S_1'$ and $S_2'$ be the images of the sequences $S_1=(x_1,\ldots,x_k)$ and $S_2=(x_{k+2},\ldots,x_l)$ under $f_{n|n'}$. Suppose $S_1'$ has an $S(n')$-weighted zero-sum subsequence of consecutive terms. By Lemma \ref{u2s}, we have $U(n')\su f_{n|n'}\big(S(n)\big)$. As $p$ divides every term of $S_1$, by Lemma \ref{lifts'} we see that $S_1$ has an $S(n)$-weighted zero-sum subsequence of consecutive terms. Now as $S_1$ is itself a subsequence of consecutive terms of $S$, we get the contradiction that $S$ has an $S(n)$-weighted zero-sum subsequence of consecutive terms. 

By Theorem \ref{un}, we have $C_{U(n')}(n')=2^{\Omega(n')}$. As $S_1'$ does not have any $U(n')$-weighted zero-sum subsequence of consecutive terms and $S_1'$ has length $k=2^{\Omega(n')}-1$, it follows that $S_1'$ is a $C$-extremal sequence for $U(n')$ in $\Z_{n'}$. A similar argument shows that $S_2'$ is also a $C$-extremal sequence for $U(n')$ in $\Z_{n'}$. So from Theorem 5 of \cite{SKS2}, we see that $S$ is a $C$-extremal sequence for $U(n)$.

\begin{case}
Given any prime divisor of $n$, exactly two terms of $S$ are not divisible by it.
\end{case}

If $S$ has at most one unit, by Lemma \ref{gs} we get the contradiction that $S$ is an $S(n)$-weighted zero-sum sequence. So we can assume that $S$ has at least two units. By the assumption in this subcase, we see that $S$ will have exactly two units and the other terms of $S$ will be zero. As $\Omega(n)\geq 2$ and as $l=2^{\Omega(n)}-1$, we see that $S$ has at least three terms. Thus, we get the contradiction that $S$  has a term which is zero. 

\begin{case}
Given any prime divisor of $n$ at least two terms of $S$ are not divisible by it, and there is a prime divisor of $n$ such that at least three terms of $S$ are not divisible by it.
\end{case}

As $n$ is squarefree and every prime divisor of $n$ is at least 7, by Lemma \ref{gs'} we get the contradiction that $S$ is an $S(n)$-weighted zero-sum sequence.
\end{proof}

\begin{rem}
When $n$ is a prime $p$, we have that $S(n)=Q_p$. The $C$-extremal sequences for $Q_p$ have been characterized in Corollary 2 of \cite{SKS2}. They are the sequences which are a permutation of a sequence of the form $(x_1,x_2)$ where $x_1\in Q_p$ and $-x_2\in U(p)\setminus Q_p$.  
\end{rem}

\section{Some results about the weight-set $L(n;p)$}

In an attempt in \cite{SKS3} to determine the constant $C_{S(n)}(n)$ when $n$ is not squarefree for a prime divisor $p$ of $n$ we had considered the subset $L(n;p)$ of $\Z_n$. 

\begin{defn}\label{lnp}
For a prime divisor $p$ of $n$, let 
$$L(n;p)=\Big\{\,a\in U(n)\,\Big{|}\,\Big(\dfrac{a}{n}\Big) =\Big(\dfrac{a}{p}\Big)\Big\}$$
\end{defn}

\begin{rem}
From Proposition 2 of \cite{SKS3}, we see that $L(n;p)=U(n)$ if $n$ has a unique prime divisor $p$ such that $v_p(n)$ is odd, and $L(n;p)$ is a subgroup of $U(n)$ having index two, otherwise. If $n'=n/p$, then $f_{n|n'}\big(L(n;p)\big)\su S(n')$ as for any $a\in U(n)$ we have
$$\Big(\dfrac{a}{n}\Big) =\Big(\dfrac{a}{n'}\Big)\Big(\dfrac{a}{p}\Big).$$
\end{rem}

The next five results, which are Lemmas 10, 11, 12 and 13 and Observation 3 of [8], will be used in the proofs in the next section.

\begin{lem}\label{s2l}
Let $p'$ and $p$ be prime divisors of $n$. Let $n'=n/p$ and suppose that $n'$ is coprime with $p$. Then $S(n')\su f_{n|n'}\big(L(n;p')\big)$. 
\end{lem}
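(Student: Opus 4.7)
The plan is to show that any $b\in S(n')$ admits a preimage $a\in L(n;p')$ under the natural map $f_{n|n'}$. Because $\gcd(n',p)=1$ and $n=n'p$, the Chinese Remainder Theorem provides the isomorphism $\Z_n\cong\Z_{n'}\times\Z_p$, and in particular $U(n)\cong U(n')\times U(p)$. So the strategy is to fix the first coordinate of the sought lift $a$ as $b$ and then choose the second coordinate in $U(p)$ to arrange the Jacobi-symbol condition defining $L(n;p')$.

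First I would invoke the identity $\big(\frac{a}{n}\big)=\big(\frac{a}{n'}\big)\big(\frac{a}{p}\big)$ mentioned in the remark after Definition \ref{lnp}. Since $a$ reduces to $b\in S(n')$ modulo $n'$, the factor $\big(\frac{a}{n'}\big)$ equals $1$, so $\big(\frac{a}{n}\big)=\big(\frac{a}{p}\big)$. Hence the requirement $a\in L(n;p')$, namely $\big(\frac{a}{n}\big)=\big(\frac{a}{p'}\big)$, reduces to the single equation $\big(\frac{a}{p}\big)=\big(\frac{a}{p'}\big)$.

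Next I would split into cases on whether $p'=p$. If $p'=p$, the condition is automatic and any CRT-lift of $b$ works. If $p'\neq p$, then $p'\mid n'$, so $\big(\frac{a}{p'}\big)=\big(\frac{b}{p'}\big)$ is already pinned down by $b$. Since $p$ is an odd prime (recall $n$ is odd throughout this part of the paper), the Legendre symbol $\big(\frac{\cdot}{p}\big)\colon U(p)\to\{\pm 1\}$ is surjective, so I can pick $c\in U(p)$ with $\big(\frac{c}{p}\big)=\big(\frac{b}{p'}\big)$ and let $a$ be the element of $U(n)$ corresponding to $(b,c)$ under CRT. Then $a\in L(n;p')$ and $f_{n|n'}(a)=b$, which is what was required.

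I do not anticipate any real obstacle; the argument is purely structural and rests only on CRT together with the multiplicativity of the Jacobi symbol. The only point needing a moment of care is that the lift $a$ genuinely lies in $U(n)$, but this is immediate because both coordinates in the CRT decomposition have been chosen in the respective unit groups $U(n')$ and $U(p)$.
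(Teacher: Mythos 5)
Your argument is correct: reducing the membership condition for $L(n;p')$ to the single equation $\big(\frac{c}{p}\big)=\big(\frac{b}{p'}\big)$ via $\big(\frac{a}{n}\big)=\big(\frac{a}{n'}\big)\big(\frac{a}{p}\big)$ and $\big(\frac{b}{n'}\big)=1$, and then solving it by surjectivity of the Legendre symbol (or noting it is automatic when $p'=p$), is exactly what is needed, and the CRT lift does land in $U(n)$ as you say. Note that the paper itself does not prove this lemma --- it imports it as Lemma 10 of \cite{SKS3} --- but your proof coincides in method with the paper's own proof of the closely related Lemma \ref{s2l3}, which performs the same Chinese-remainder construction and adjusts the $U(p)$-coordinate to force the Jacobi-symbol identity.
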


\begin{lem}\label{u2l}
Let $p'$ be a prime divisor of $n$ which is coprime to $n'=n/p'$. Then $U(p')\su f_{n|p'}\big(L(n;p')\big)$.  
\end{lem}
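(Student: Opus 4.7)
The plan is to apply the Chinese Remainder Theorem together with the multiplicativity of the Jacobi symbol. Since $p'$ is a prime divisor of $n$ coprime to $n'=n/p'$, the map $a\mapsto\big(f_{n|p'}(a),f_{n|n'}(a)\big)$ gives an isomorphism $U(n)\cong U(p')\times U(n')$, so given prescribed components in $U(p')$ and $U(n')$ we can lift them to a single unit in $\Z_n$.

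The key observation is a characterization of $L(n;p')$ in terms of $S(n')$. For any $a\in U(n)$, the multiplicativity of the Jacobi symbol (already used in the remark following Definition \ref{lnp}) gives
$$\Big(\dfrac{a}{n}\Big)=\Big(\dfrac{a}{p'}\Big)\Big(\dfrac{a}{n'}\Big).$$
Since $(a/p')\in\{\pm 1\}$, the defining condition $(a/n)=(a/p')$ of $L(n;p')$ is equivalent to $(a/n')=1$, that is, to $f_{n|n'}(a)\in S(n')$. Hence an element $a\in U(n)$ lies in $L(n;p')$ if and only if $f_{n|n'}(a)\in S(n')$.

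With this reformulation in hand, given $u\in U(p')$, I would use CRT to produce $a\in \Z_n$ satisfying $a\equiv u\pmod{p'}$ and $a\equiv 1\pmod{n'}$. Since $u\in U(p')$ and $1\in U(n')$, the element $a$ lies in $U(n)$. Because $f_{n|n'}(a)=1\in S(n')$, the characterization above forces $a\in L(n;p')$, and by construction $f_{n|p'}(a)=u$. This shows $U(p')\su f_{n|p'}\big(L(n;p')\big)$, as required.

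There is no real obstacle here: the argument is a direct consequence of CRT and the factorization of the Jacobi symbol through $p'$ and $n'$. The only care needed is choosing the companion component at $n'$ to be $1$ (or any other representative of $S(n')$) so that membership in $L(n;p')$ is automatic.
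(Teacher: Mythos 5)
Your proof is correct. The paper itself imports this lemma from \cite{SKS3} without reproving it, but your argument --- a Chinese Remainder Theorem lift with the $n'$-component set to $1$, justified by the factorization $\big(\frac{a}{n}\big)=\big(\frac{a}{p'}\big)\big(\frac{a}{n'}\big)$ which turns the condition $a\in L(n;p')$ into $f_{n|n'}(a)\in S(n')$ --- is exactly the technique the authors use for the analogous Lemma \ref{s2l3}, so it is essentially the same approach.
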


\begin{lem}\label{gl'}
Let $n$ be squarefree and let $p'$ be a prime divisor of $n$. Let $n'=n/p'$ and let $\psi:U(n)\to U(n')\times U(p')$ be the isomorphism given by the Chinese remainder theorem. Then $S(n')\times U(p')\su \psi\big(L(n;p')\big)$.
\end{lem}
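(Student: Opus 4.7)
The plan is a direct verification exploiting the multiplicativity of the Jacobi symbol with respect to the modulus. Because $n$ is squarefree, one has $n = n'p'$ with $\gcd(n',p')=1$, and the remark preceding the lemma records the identity
\[
\Big(\frac{a}{n}\Big) \;=\; \Big(\frac{a}{n'}\Big)\Big(\frac{a}{p'}\Big)
\]
for every $a \in U(n)$. I would combine this with the fact that $\psi$ is an isomorphism onto $U(n')\times U(p')$.

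Concretely, fix an arbitrary pair $(a,b) \in S(n')\times U(p')$. Since $a\in S(n')\su U(n')$ and $b\in U(p')$, the pair lies in $U(n')\times U(p')$, so by surjectivity of $\psi$ there is a unique $x\in U(n)$ with $\psi(x)=(a,b)$, i.e.\ $x\equiv a\pmod{n'}$ and $x\equiv b\pmod{p'}$. The remaining task is to check that $x$ satisfies the defining condition of $L(n;p')$, namely $\big(\frac{x}{n}\big)=\big(\frac{x}{p'}\big)$.

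Applying the multiplicativity identity to $x$ and using $x\equiv a\pmod{n'}$ together with the hypothesis $a\in S(n')$, one gets $\big(\frac{x}{n'}\big)=\big(\frac{a}{n'}\big)=1$, hence $\big(\frac{x}{n}\big)=\big(\frac{x}{n'}\big)\big(\frac{x}{p'}\big)=\big(\frac{x}{p'}\big)$, so $x\in L(n;p')$ and therefore $(a,b)=\psi(x)\in\psi\big(L(n;p')\big)$.

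There is essentially no obstacle here: the lemma collapses to a single application of the Jacobi-symbol multiplicativity combined with the CRT bijection $\psi$. The only point worth noting is that the assumption $a \in S(n')$ (rather than merely $a\in U(n')$) is exactly what forces the $\big(\frac{x}{n'}\big)$ factor to equal $1$, which is why the statement is $S(n')\times U(p')\su\psi\big(L(n;p')\big)$ and not $U(n')\times U(p')\su\psi\big(L(n;p')\big)$.
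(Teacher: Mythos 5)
Your proof is correct: the multiplicativity identity $\big(\tfrac{x}{n}\big)=\big(\tfrac{x}{n'}\big)\big(\tfrac{x}{p'}\big)$ together with $\big(\tfrac{x}{n'}\big)=\big(\tfrac{a}{n'}\big)=1$ for $a\in S(n')$ is exactly the right mechanism, and the paper itself records that identity in the remark following Definition \ref{lnp}. Note that the present paper does not prove this lemma at all --- it imports it as Lemma 12 of \cite{SKS3} --- so your direct verification supplies the (standard, and surely intended) argument rather than deviating from one given here.
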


\begin{lem}\label{gl}
Let $n$ be squarefree and $p'$ be a prime divisor of $n$. Let $S$ be a sequence in $\Z_n$ such that for every prime divisor $p$ of $n$, at least two terms of $S$ are coprime to $p$. Let $n'=n/p'$ and $S'$ be the image of $S$ under $f_{n|n'}$. Suppose at most one term of $S'$ is a unit OR suppose there is a prime divisor $p$ of $n/p'$ such that at least three terms of $S$ are coprime to $p$. Then $S$ is an $L(n;p')$-weighted zero-sum sequence. 
\end{lem}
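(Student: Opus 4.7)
The plan is to use the Chinese Remainder Theorem isomorphism $\psi\colon U(n)\to U(n')\times U(p')$ to split the problem into an $n'$-component and a $p'$-component, solve each separately, and then recombine using Lemma~\ref{gl'}.

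First I would show that the image sequence $S'$ in $\Z_{n'}$ is an $S(n')$-weighted zero-sum sequence. Since $n$ is squarefree, $n'$ is squarefree and its prime divisors are precisely the prime divisors of $n$ other than $p'$; in particular the hypothesis that at least two terms of $S$ are coprime to every prime of $n$ descends to the analogous hypothesis for $S'$ relative to every prime of $n'$. If at most one term of $S'$ is a unit in $U(n')$ (the first disjunct), Lemma~\ref{gs} applied to $S'$ in $\Z_{n'}$ yields weights $\alpha_1,\ldots,\alpha_k\in S(n')$ with $\sum_i \alpha_i x_i'=0$ in $\Z_{n'}$. If instead some prime $p\mid n/p'=n'$ has at least three terms of $S$, equivalently of $S'$, coprime to it (the second disjunct), then Lemma~\ref{gs'} applied to $S'$ produces the same conclusion.

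Next, writing $y_i$ for the image of $x_i$ in $\Z_{p'}$, I want to pick $\beta_i\in U(p')$ so that $\sum_i \beta_i y_i\equiv 0\pmod{p'}$. By hypothesis at least two of the $y_i$, say $y_{i_1}$ and $y_{i_2}$, lie in $U(p')$. Set $\beta_i=1$ for every $i\notin\{i_1,i_2\}$, let $\beta_{i_1}$ range over $U(p')$, and define
\[
\beta_{i_2}\;=\;-\,y_{i_2}^{-1}\Bigl(\beta_{i_1}y_{i_1}+\sum_{i\neq i_1,i_2}y_i\Bigr)
\]
as the unique element of $\Z_{p'}$ making the weighted sum vanish. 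Since $y_{i_2}$ is a unit, $\beta_{i_2}$ assumes $p'-1$ distinct values in $\Z_{p'}$ as $\beta_{i_1}$ varies, so at most one of these values is zero modulo $p'$; the remaining choices give $\beta_{i_2}\in U(p')$.

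Finally, Lemma~\ref{gl'} supplies, for each $i$, an element $a_i\in L(n;p')$ with $\psi(a_i)=(\alpha_i,\beta_i)$, because $(\alpha_i,\beta_i)\in S(n')\times U(p')\subseteq \psi\bigl(L(n;p')\bigr)$. By construction $\sum_i a_i x_i$ vanishes both modulo $n'$ and modulo $p'$, and so vanishes in $\Z_n$, which exhibits $S$ as an $L(n;p')$-weighted zero-sum sequence. The main technical point to watch is the simultaneous unit constraint on $\beta_{i_1}$ and $\beta_{i_2}$ in the previous paragraph; this is where the pigeonhole argument matters, and it is also the reason the lemma requires two terms coprime to $p'$ rather than just one.
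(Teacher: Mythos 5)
This lemma is one of the results the paper imports verbatim from \cite{SKS3} (it is Lemma 13 there) and is stated here without proof, so there is no in-paper argument to compare against; judged on its own, your proof is correct and is exactly the argument the surrounding text suggests the authors have in mind. You split via the CRT isomorphism $\psi:U(n)\to U(n')\times U(p')$, handle the $n'$-component with Lemma \ref{gs} (first disjunct) or Lemma \ref{gs'} (second disjunct), handle the $p'$-component with an explicit unit-weight pigeonhole argument (which is Lemma 2.1(ii) of \cite{sg}, the result the paper itself invokes for this purpose), and recombine via Lemma \ref{gl'}; the recombination step is precisely Observation \ref{obs3}, and the paper runs this identical $\mathrm{gl'}$-plus-$\mathrm{obs3}$ pattern in Case 2 of the proof of Theorem \ref{extl3}. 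The one point to flag is the second disjunct: Lemma \ref{gs'} as quoted requires every prime divisor to be at least $7$, whereas Lemma \ref{gl} as transcribed assumes only that $n$ is squarefree, so your argument strictly covers only the case where every prime divisor of $n'$ is at least $7$; this is harmless for every application made in this paper (where that hypothesis always holds), but you should either note it or add the hypothesis to the statement you are proving.
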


\begin{obs}\label{obs3}
Let $A\su \Z_n$ and let $S$ be a sequence in $\Z_n$. Let $n=m_1m_2$ where $m_1$ and $m_2$ are coprime. For $i=1,2$, let $A_i\su\Z_{m_i}$ be given and let $S_i$ denote the image of the sequence $S$ under $f_{n|m_i}$. Let $\psi:U(n)\to U(m_1)\times U(m_2)$ be the isomorphism given by the Chinese remainder theorem. Suppose we have that $A_1\times A_2\su \psi(A)$. If $S_1$ is an $A_1$-weighted zero-sum sequence in $\Z_{m_1}$ and if $S_2$ is an $A_2$-weighted zero-sum sequence in $\Z_{m_2}$, then $S$ is an $A$-weighted zero-sum sequence in $\Z_n$. 
\end{obs}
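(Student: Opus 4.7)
The plan is to prove this directly by lifting weights via the Chinese Remainder Theorem. Write $S=(x_1,\ldots,x_k)$, and let $S_i=(x_1^{(i)},\ldots,x_k^{(i)})$ denote its image under $f_{n|m_i}$ for $i=1,2$. Since $S_1$ is an $A_1$-weighted zero-sum sequence in $\Z_{m_1}$, I can fix weights $a_1,\ldots,a_k\in A_1$ such that $\sum_{j=1}^{k} a_j\,x_j^{(1)}=0$ in $\Z_{m_1}$. Similarly, I fix weights $b_1,\ldots,b_k\in A_2$ with $\sum_{j=1}^{k} b_j\,x_j^{(2)}=0$ in $\Z_{m_2}$.

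Next I use the containment $A_1\times A_2\su \psi(A)$ to pick up weights in $A$. For each index $j$, the pair $(a_j,b_j)$ lies in $A_1\times A_2$, so there exists $c_j\in A$ with $\psi(c_j)=(a_j,b_j)$. Unpacking the definition of $\psi$ (componentwise reduction), this says precisely that $f_{n|m_1}(c_j)=a_j$ and $f_{n|m_2}(c_j)=b_j$.

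Finally, I propose to verify that $\sum_{j=1}^{k} c_j\,x_j=0$ in $\Z_n$ by checking both reductions. Since $f_{n|m_i}$ is a ring homomorphism,
\[
f_{n|m_1}\Bigl(\sum_{j=1}^{k} c_j x_j\Bigr)=\sum_{j=1}^{k} a_j\,x_j^{(1)}=0,
\]
and analogously the image under $f_{n|m_2}$ vanishes. Because $m_1$ and $m_2$ are coprime, the Chinese Remainder Theorem gives an isomorphism $\Z_n\to\Z_{m_1}\times\Z_{m_2}$ whose components are the natural maps $f_{n|m_1}$ and $f_{n|m_2}$, so $\sum c_j x_j=0$ in $\Z_n$. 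As the $c_j$ lie in $A$, this exhibits $S$ as an $A$-weighted zero-sum sequence.

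There is no genuine obstacle here; the only care needed is to lift the pair of weights $(a_j,b_j)$ into $A$ itself (not merely into $U(n)$), which is exactly what the hypothesis $A_1\times A_2\su\psi(A)$ provides, and to apply CRT to all of $\Z_n$ rather than just $U(n)$ in the concluding step.
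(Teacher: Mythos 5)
Your proof is correct: fixing weights for each factor, lifting each pair $(a_j,b_j)$ to some $c_j\in A$ via the hypothesis $A_1\times A_2\su\psi(A)$, and then checking that $\sum_j c_jx_j$ vanishes modulo both $m_1$ and $m_2$ is exactly the intended Chinese Remainder Theorem argument (the paper imports this observation from reference [8] without reproducing the proof, but this is the standard one). The one point you rightly flag --- that the lift must land in $A$ itself, which is precisely what the hypothesis guarantees --- is the only place where care is needed.
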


The next two results follow from Theorems 5, 6, 7 and 8 of \cite{SKS3}. 

\begin{thm}\label{l}
Let $p$ be a prime divisor of $n$ where $n$ is a squarefree number which is not a product of two primes and every prime divisor of $n$ is at least 7. Then $D_{L(n;p)}(n)=\Omega(n)+1$ and $C_{L(n;p)}(n)=2^{\Omega(n)}$. 
\end{thm}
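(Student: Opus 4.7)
The plan is to establish $D_{L(n;p)}(n)=\Omega(n)+1$ and $C_{L(n;p)}(n)=2^{\Omega(n)}$ via matching lower and upper bounds, using the three-case analysis from the proofs of Theorems~\ref{dexts} and~\ref{cexts}. For the lower bounds, since $L(n;p)\subseteq U(n)$, any $L(n;p)$-weighted zero-sum is automatically a $U(n)$-weighted zero-sum; hence the $D$- and $C$-extremal sequences for $U(n)$ provided by Theorem~\ref{un} (of lengths $\Omega(n)$ and $2^{\Omega(n)}-1$) remain extremal for $L(n;p)$, giving $D_{L(n;p)}(n)\geq \Omega(n)+1$ and $C_{L(n;p)}(n)\geq 2^{\Omega(n)}$.

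For the $D$-upper bound I take an arbitrary sequence $S$ of length $\Omega(n)+1$ in $\Z_n$ with no zero term (otherwise the conclusion is immediate) and split into three cases. In Case~(I), some prime $q$ of $n$ has at most one term of $S$ coprime to it, so the subsequence $T$ of terms divisible by $q$ has length $\geq \Omega(n) = D_{S(n/q)}(n/q)$ by Theorem~\ref{sn} (applicable since $\Omega(n/q)\geq 2$; the trivial case $\Omega(n)=1$ gives $L(n;p)=U(n)$ directly); projecting $T$ via $f_{n\mid n/q}$ yields an $S(n/q)$-weighted zero-sum subsequence that lifts to an $L(n;p)$-weighted zero-sum subsequence of $S$ via Lemma~\ref{s2l} (after relabeling its primes $p',p$ as our $p,q$) combined with Lemma~\ref{lifts'}. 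In Case~(II), every prime has exactly two coprime terms; a counting argument (two units in $\Z_n$ would force $\Omega(n)-1\geq 2$ of the other terms to be divisible by every prime, hence zero) rules out more than one unit in $\Z_n$, and a parallel analysis in $\Z_{n/p}$ verifies the unit-count hypothesis of Lemma~\ref{gl} with $p'=p$, except in one extreme $\Omega(n)=3$ configuration (two terms divisible exactly by $p$ and two divisible exactly by $n/p$), which is resolved by a Chinese Remainder computation via Lemma~\ref{gl'} and Observation~\ref{obs3} using the two $n/p$-divisible terms. In Case~(III), some prime of $n$ has at least three coprime terms of $S$, handled by Lemma~\ref{gl} with $p'=p$ when that prime differs from $p$, and by the same CRT argument when it coincides with $p$.

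For the $C$-upper bound I run the same case split on a sequence $S$ of length $2^{\Omega(n)}$. Case~(I) is handled as in Theorem~\ref{cexts}: removing the at most one term of $S$ coprime to $q$ splits $S$ into two consecutive blocks of total length $2^{\Omega(n)}-1$, so one block has length $\geq 2^{\Omega(n)-1}=C_{S(n/q)}(n/q)$ by Theorem~\ref{sn}, and its image in $\Z_{n/q}$ has an $S(n/q)$-weighted zero-sum consecutive subsequence that lifts to $S$. Case~(II) is vacuous: the incidence inequality $\Omega(n)(|S|-2)\leq |S|(\Omega(n)-1)$ (each non-zero term lying in at most $\Omega(n)-1$ of the $\Omega(n)$ divisibility sets) rearranges to $|S|\leq 2\Omega(n)$, contradicting $2^{\Omega(n)}>2\Omega(n)$ when $\Omega(n)\geq 3$. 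In Case~(III), Lemma~\ref{gl} with $p'=p$ gives an $L(n;p)$-weighted zero-sum of the whole sequence (trivially consecutive) when the ``$\geq 3$ coprime'' prime is not $p$; when it is $p$, an inclusion-exclusion count forces at least $2^{\Omega(n)}-2(\Omega(n)-1)\geq 2$ terms to be divisible by $n/p$, and one uses Lemma~\ref{gl'} and Observation~\ref{obs3} to extract a consecutive subsequence admitting an $L(n;p)$-weighted zero-sum.

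The hardest steps I anticipate are the extreme configurations where Lemma~\ref{gl} does not apply verbatim---the $\Omega(n)=3$ configuration in Case~(II) of the $D$-bound and the ``$\geq 3$ coprime prime equals $p$'' subcase of Case~(III) of the $C$-bound. In both, the key observation is that any two terms of $S$ of the form $(a_i,0)\in \Z_p\times\Z_{n/p}$ with $a_i\in U(\Z_p)$ admit weights $(u_i,1)\in L(n;p)$ (using $1\in S(n/p)$) satisfying $u_1 a_1+u_2 a_2=0$ in $\Z_p$, which by Lemma~\ref{gl'} and Observation~\ref{obs3} constitutes an explicit two-term $L(n;p)$-weighted zero-sum subsequence; identifying such a pair (and, for the $C$-bound, arranging it consecutively or embedding it in a slightly larger consecutive window) is the technical heart of those subcases.
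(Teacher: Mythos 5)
The paper itself does not prove Theorem \ref{l}: it is imported from the companion paper \cite{SKS3} (``The next two results follow from Theorems 5, 6, 7 and 8 of \cite{SKS3}''), so there is no in-paper proof to compare against. Your reconstruction is nevertheless very much in the spirit of the arguments the paper does give for Theorems \ref{dextl} and \ref{cextl}, which recycle exactly this three-case analysis, and most of it checks out: the lower bounds from $L(n;p)\su U(n)$ together with Theorem \ref{un} are correct; Case (I) correctly reduces to $D_{S(n/q)}$, respectively $C_{S(n/q)}$, via Theorem \ref{sn} and Lemmas \ref{s2l} and \ref{lifts'} (with $\Omega(n)=1$ handled separately since then $L(n;p)=U(n)$); the incidence count making Case (II) vacuous for the $C$-bound is valid since $2^{\Omega(n)}>2\Omega(n)$ for $\Omega(n)\ge 3$; and the exceptional $\Omega(n)=3$ configuration you isolate in Case (II) of the $D$-bound is exactly the right one and is correctly dispatched by the CRT argument on the two terms divisible by $n/p$.

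The one genuine gap is Case (III) of the $C$-bound when the prime with at least three coprime terms is $p$ itself. Your inclusion--exclusion bound gives only $2^{\Omega(n)}-2(\Omega(n)-1)$ terms divisible by $n/p$, scattered among up to $2(\Omega(n)-1)+1$ gaps; for $\Omega(n)=3$ that is $4$ terms in up to $5$ gaps, which does not force two of them to be adjacent, so the consecutive two-term window your CRT argument needs is not guaranteed by that count, and you explicitly leave ``arranging it consecutively'' unresolved. The repair is to sharpen the count: in this subcase each prime $q_i\mid n/p$ has exactly two coprime terms, and since the (at least two) terms coprime to $n/p$ lie in every such two-element set, all these sets coincide with a single two-element set $Y$. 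Hence exactly $2^{\Omega(n)}-2$ terms are divisible by $n/p$ (and, being non-zero, coprime to $p$); deleting the two terms of $Y$ leaves at most three consecutive blocks of total length $2^{\Omega(n)}-2\ge 6$, so some block has length at least two, and Lemma \ref{gl'} with Observation \ref{obs3} (or Lemmas \ref{u2l} and \ref{lifts'}, as in the paper's proof of Theorem \ref{cextl}) applies to that block. With that repair your proposal is a complete proof.
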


\begin{thm}\label{l2}
Let $p$ be a prime divisor of $n$ where $n$ is a product of two distinct primes which are at least 7. Then $D_{L(n;p)}(n)=4$ and $C_{L(n;p)}(n)=6$.
\end{thm}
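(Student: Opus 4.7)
The plan is to establish both $D_{L(n;p)}(n)=4$ and $C_{L(n;p)}(n)=6$ by proving matching upper and lower bounds, combining the CRT-decomposition of $L(n;p)$ with the atomic constants $D_{U(p)}(p)=C_{U(p)}(p)=2$ from Theorem \ref{un} and $D_{Q_q}(q)=C_{Q_q}(q)=3$ from Theorem \ref{qp}. Write $n=pq$ where $q$ is the other prime divisor, and let $\psi:U(n)\to U(p)\times U(q)$ be the CRT isomorphism. Since $\bigl(\tfrac{a}{n}\bigr)=\bigl(\tfrac{a}{p}\bigr)\bigl(\tfrac{a}{q}\bigr)$, one has $\psi\bigl(L(n;p)\bigr)=U(p)\times Q_q$, so by Observation \ref{obs3} any $L(n;p)$-weighted zero-sum (consecutive) subsequence corresponds to a common (consecutive) subsequence whose $p$-projection is $U(p)$-weighted zero-sum in $\Z_p$ and whose $q$-projection is $Q_q$-weighted zero-sum in $\Z_q$.

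For the upper bound $D_{L(n;p)}(n)\leq 4$, I would take any length-$4$ sequence $S$ in $\Z_n$ and case-split. Case (i): at least three terms of $S$ are divisible by $p$; projecting those terms via $f_{n|q}$ yields a sequence of length $\geq 3$ in $\Z_q$, which has a $Q_q$-weighted zero-sum by $D_{Q_q}(q)=3$, and lifting via Lemma \ref{lifts'} (with $B=Q_q\subseteq f_{n|q}(L(n;p))=Q_q$) produces the desired zero-sum. Case (ii): at least two terms of $S$ are divisible by $q$; handled symmetrically using Lemma \ref{u2l} (which gives $U(p)\subseteq f_{n|p}(L(n;p))$) together with $D_{U(p)}(p)=2$. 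Case (iii): neither of the above, forcing at least two terms coprime to $p$ and at least three terms coprime to $q$, to which Lemma \ref{gl} (with $p'=p$) applies directly. The argument for $C_{L(n;p)}(n)\leq 6$ is parallel on a length-$6$ sequence with the consecutive-subsequence versions of these steps, and the product shape $6=C_{U(p)}(p)\cdot C_{Q_q}(q)$ emerges because stringing together a length-$\leq 2$ consecutive $U(p)$-zero-sum on the $p$-side with a length-$\leq 3$ consecutive $Q_q$-zero-sum on the $q$-side occupies at most $6$ consecutive positions in $\Z_n$.

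For the lower bounds I would exhibit explicit extremal sequences. For $D_{L(n;p)}(n)\geq 4$ the candidate is $(p,\,pu,\,q)$, where $u\in U(n)$ is chosen so that $u\bmod q$ lies in $U(q)\setminus(-Q_q)$. Any subsequence involving the term $q$ reduces modulo $p$ to force a weight in $L(n;p)$ to vanish there, which contradicts $L(n;p)\subseteq U(n)$; the only delicate case is the pair $(p,pu)$, and the choice of $u$ ensures that $-a/b\not\equiv u\pmod q$ for any $a,b\in L(n;p)$, blocking the sole remaining zero-sum pattern. For $C_{L(n;p)}(n)\geq 6$ a ``gluing'' construction produces a length-$5$ sequence by interleaving a $C$-extremal length-$2$ sequence in $\Z_q$ for $Q_q$ (realized as multiples of $p$) with a $C$-extremal length-$1$ sequence in $\Z_p$ for $U(p)$ (realized as a multiple of $q$), arranged so that every consecutive sub-block has the required obstruction on at least one CRT coordinate.

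The main obstacle is the $C$-lower bound: one must verify that each of the fifteen consecutive sub-blocks of the length-$5$ sequence fails on at least one of the two CRT coordinates, which requires a position-aware interleaving and leans on the structural characterizations of $C$-extremal sequences for $U(p)$ (from \cite{SKS2}) and for $Q_q$ (from \cite{SKS3}). The upper-bound case analyses, while branching, are largely bookkeeping once the CRT identification $\psi\bigl(L(n;p)\bigr)=U(p)\times Q_q$ and Observation \ref{obs3} are in hand.
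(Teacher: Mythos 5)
First, note that the paper does not prove this statement at all: Theorem \ref{l2} is imported from Theorems 5--8 of \cite{SKS3}, so there is no in-paper proof to compare yours against. Judged on its own terms, your proposal gets the right skeleton --- the identification $\psi\big(L(n;p)\big)=U(p)\times Q_q$ is correct (since $\big(\frac{a}{n}\big)=\big(\frac{a}{p}\big)$ iff $\big(\frac{a}{q}\big)=1$), and the entire $D$-part is sound: the three-way case split for $D\leq 4$ closes up correctly via Lemmas \ref{lifts'}, \ref{u2l} and \ref{gl}, and the sequence $(p,pu,q)$ with $u\bmod q\notin -Q_q$ is a genuine length-$3$ obstruction (every block containing $q$ dies mod $p$, every singleton dies because weights are units, and the pair $(p,pu)$ dies by the choice of $u$).

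The $C$-part, however, has two genuine gaps. For the upper bound, ``stringing together a length-$\leq 2$ consecutive $U(p)$-zero-sum with a length-$\leq 3$ consecutive $Q_q$-zero-sum'' is not what Observation \ref{obs3} requires: you need a \emph{single} consecutive block whose mod-$p$ projection and mod-$q$ projection are simultaneously zero-sums for the respective weight sets, not a concatenation of a block that works mod $p$ with a block that works mod $q$. The natural attempt --- pick a $U(p)$-zero-sum sub-block $B_i$ inside each consecutive pair $(x_{2i-1},x_{2i})$ and then combine the resulting values mod $q$ using $C_{Q_q}(q)=3$ --- fails exactly where it matters: if, say, $B_1=\{x_1\}$ and $B_2=\{x_4\}$, their union is not a consecutive block of $S$, so the case analysis needed to repair this (which is the actual content of the corresponding theorems in \cite{SKS3}) is missing, and it is not ``largely bookkeeping.'' For the lower bound, interleaving a length-$2$ extremal sequence for $Q_q$ with a length-$1$ extremal sequence for $U(p)$ produces $3$ terms, not $5$; the construction you need (visible in Theorem \ref{lext2'}) is $(y_1,x_2,y_3,x_4,y_5)$ with \emph{three} nonzero multiples of $q$ in the odd positions and $(x_2,x_4)$ reducing mod $q$ to a $Q_q$-extremal pair. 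That sequence does work --- any admissible block reduces mod $q$ to a $Q_q$-weighted combination of a subsequence of $(x_2,x_4)$, which cannot vanish, so the block must be a single $y_i$, which cannot vanish mod $p$ --- but you neither wrote it down nor verified it, and you flagged this verification yourself as the unresolved obstacle.
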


\section{$D$-extremal sequences for $L(n;p')$}

\begin{rem}\label{ld}
Let $p$ be a prime divisor of $n$. As $L(n;p)\su U(n)$, an $L(n;p)$-weighted zero-sum subsequence is also a $U(n)$-weighted zero-sum subsequence. So if $n$ is such that $D_{U(n)}(n)=D_{L(n;p)}(n)$, then a $D$-extremal sequence for $U(n)$ is also a $D$-extremal sequence for $L(n;p)$. It is interesting to observe that the converse is also true for `most' squarefree numbers, as is shown in the next result. 
\end{rem}

\begin{thm}\label{dextl}
Let $n$ be a squarefree number such that every prime divisor of $n$ is at least 7. Suppose $p'$ is a prime divisor of $n$ and $\Omega(n)\neq 2,3$. Then $S$ is a $D$-extremal sequence for $L(n;p')$ if and only if $S$ is a $D$-extremal sequence for $U(n)$.  
\end{thm}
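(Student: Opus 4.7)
The plan is to prove the two implications separately. The ``if'' direction is immediate: Theorems~\ref{un} and~\ref{l} give $D_{U(n)}(n) = D_{L(n;p')}(n) = \Omega(n)+1$ (the hypothesis $\Omega(n) \neq 2$ avoids the anomalous value from Theorem~\ref{l2}), and since $L(n;p') \su U(n)$ any sequence with no $U(n)$-weighted zero-sum subsequence automatically has no $L(n;p')$-weighted zero-sum subsequence; hence any $D$-extremal sequence for $U(n)$ is $D$-extremal for $L(n;p')$. For the converse, I take a $D$-extremal sequence $S = (x_1,\ldots,x_k)$ for $L(n;p')$; the common Davenport value forces $k = \Omega(n)$ with every $x_i$ nonzero, and we may assume $\Omega(n)\geq 4$ (the case $\Omega(n)=1$ is trivial since then $L(n;p')=U(n)$). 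Mimicking the proof of Theorem~\ref{dexts}, I would split into three cases on the divisibility pattern of the terms.

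In Case A some prime $p$ of $n$ leaves at most one term of $S$ coprime to it. If all terms are divisible by $p$, then $S''=f_{n|n/p}(S)$ in $\Z_{n/p}$ has length $\Omega(n)=D_{S(n/p)}(n/p)$ (Theorem~\ref{sn}, using $\Omega(n/p)\geq 3$), so it admits an $S(n/p)$-weighted zero-sum subsequence; Lemmas~\ref{s2l} and~\ref{lifts'} lift this to an $L(n;p')$-weighted zero-sum subsequence of $S$, a contradiction. If exactly one term $x^*$ is coprime to $p$, the same lifting argument applied to $T=S\setminus(x^*)$ forces $T''$ to be $D$-extremal for $S(n/p)$, hence (by Theorem~\ref{dexts'}) for $U(n/p)$. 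To deduce that $S$ is $D$-extremal for $U(n)$, I would directly rule out any $U(n)$-weighted zero-sum subsequence $S_0$ of $S$: if $x^*\notin S_0$ then the image of $S_0$ in $\Z_{n/p}$ contradicts the $D$-extremality of $T''$, while if $x^*\in S_0$ then projecting to $\Z_p$ kills every other term, leaving a single unit~$\cdot$~unit~$=0$ equation, impossible.

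In Cases B and C every prime of $n$ has at least two coprime terms. Lemma~\ref{gl} would immediately supply an $L(n;p')$-weighted zero-sum of $S$ unless simultaneously no prime of $n/p'$ admits three or more coprime terms and the image $S'$ in $\Z_{n/p'}$ contains at least two units; a quick counting argument (each $\Z_{n/p'}$-unit contributes to the coprime count of every prime of $n/p'$) pins down this residual configuration to exactly two terms, say $x_1,x_2$, being units in $\Z_{n/p'}$, with the remaining $k-2$ terms divisible by $n/p'$ and (being nonzero in $\Z_n$) coprime to $p'$. This is the main obstacle, since Lemma~\ref{gl} no longer applies and the argument must pivot to the CRT-level description of $L(n;p')$ provided by Lemma~\ref{gl'}. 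Under the isomorphism $\psi:U(n)\to U(n/p')\times U(p')$, the terms $x_3,\ldots,x_k$ take the form $(0,v_i)$ with $v_i\in U(p')$; the exclusion $\Omega(n)\neq 2,3$ forces $k-2\geq 2$, so the length-two sequence $(v_3,v_4)$ in $\Z_{p'}$ admits a $U(p')$-weighted zero-sum (from $D_{U(p')}(p')=2$) while the pair $(0,0)$ in $\Z_{n/p'}$ trivially admits an $S(n/p')$-weighted zero-sum. Lemma~\ref{gl'} combined with Observation~\ref{obs3} then lifts these to an $L(n;p')$-weighted zero-sum of the pair $(x_3,x_4)$, contradicting the $D$-extremality of $S$ and completing the proof.
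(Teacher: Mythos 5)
Your proposal is correct and follows the same overall skeleton as the paper's proof: the same reduction to length $k=\Omega(n)$ with $\Omega(n)\geq 4$, the same three-way case split on the divisibility pattern of the terms, and the same use of Lemmas \ref{s2l} and \ref{lifts'} to lift $S(n')$-weighted zero-sums to $L(n;p')$-weighted zero-sums. You deviate in two sub-steps, both legitimately. First, where the paper closes Case 1 by quoting Theorem 6 of \cite{AMP} (the characterization of $D$-extremal sequences for $U(n)$) to glue the unit term $x^*$ and the $U(n')$-extremal sequence $T''$ back into a $U(n)$-extremal sequence, you argue directly that $S$ admits no $U(n)$-weighted zero-sum subsequence by splitting on whether such a subsequence contains $x^*$; this is more self-contained and avoids the external characterization. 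Second, in the residual configuration of your Cases B/C the paper extracts a subsequence $T$ of length at least two avoiding the two terms that are units modulo $n'=n/p'$, invokes Lemma 2.1(ii) of \cite{sg} to see that $T^{(p')}$ is a $U(p')$-weighted zero-sum sequence, and lifts via Lemmas \ref{u2l} and \ref{lifts'}; you instead pair the zero sequence in $\Z_{n'}$ with an explicit $U(p')$-weighted zero-sum of $(v_3,v_4)$ and assemble them through Lemma \ref{gl'} and Observation \ref{obs3}. The only imprecision is the appeal to $D_{U(p')}(p')=2$, which a priori yields only a $U(p')$-weighted zero-sum \emph{subsequence} of $(v_3,v_4)$ rather than the full pair required by Observation \ref{obs3}; since both entries are units, a singleton cannot be a weighted zero-sum, so the subsequence is forced to be the whole pair and your argument goes through.
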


\begin{proof}
Let $n$ be a squarefree number such that  every prime divisor of $n$ is at least 7 and let $p'$ be a prime divisor of $n$. Suppose $\Omega(n)\neq 2$. By Theorems \ref{un} and \ref{l} we have that $D_{U(n)}(n)=D_{L(n;p)}(n)$, and so by Remark \ref{ld} it is enough to show that if $S$ is a $D$-extremal sequence for $L(n;p')$, then $S$ is a $D$-extremal sequence for $U(n)$.

Let $S=(x_1,\ldots,x_k)$ be a $D$-extremal sequence for $L(n;p')$. Then all terms of $S$ must be non-zero. If $\Omega(n)=1$, then $n=p'$. As $L(n;p')=U(n)$, it follows that $S$ is a $D$-extremal sequence for $U(n)$. Let $\Omega(n)\geq 4$. By Theorem \ref{l} we have $D_{L(n;p')}(n)=\Omega(n)+1$. Thus $S$ must have length $\Omega(n)$.

\begin{case}
There is a prime divisor $p$ of $n$ such that at most one term of $S$ is coprime to $p$.
\end{case}

Suppose all terms of $S$ are divisible by $p$. Let $n'=n/p$ and let $S'$ be the image of $S$ under $f_{n|n'}$. As $S'$ has length $\Omega(n)=\Omega(n')+1$, by Theorem \ref{sn} we see that $S'$ has an $S(n')$-weighted zero-sum subsequence. As $n$ is squarefree, so $p$ is coprime to $n'$. Thus, from Lemmas \ref{lifts'} and \ref{s2l} we get the contradiction that $S$ has an $L(n;p')$-weighted zero-sum subsequence.

Hence, there is exactly one term of $S$ which is not divisible by $p$. Let us assume that that term is $x_1$. Let $T:(x_2,\ldots,x_k)$ and let $T'$ be the image of $T$ under $f_{n|n'}$. Suppose $T'$ has an $S(n')$-weighted zero-sum subsequence. Then by Lemma \ref{s2l} we have $S(n')\su f_{n|n'}\big(L(n;p')\big)$ and so by Lemma \ref{lifts'} we get the contradiction that $S$ has an $L(n;p')$-weighted zero-sum subsequence. Thus, $T'$ is a sequence of length $\Omega(n')$ in $\Z_{n'}$ which does not have any $S(n')$-weighted zero-sum subsequence.

From Theorem \ref{sn} as $D_{S(n')}(n')=\Omega(n')+1$, it follows that $T'$ is a $D$-extremal sequence for $S(n')$. When $\Omega(n)\geq 4$, then $\Omega(n')\geq 3$ and so from  Theorem \ref{dexts}, we get that $T'$ is a $D$-extremal sequence for $U(n')$. So by Theorem 6 of \cite{AMP} we get that $S$ is a $D$-extremal sequence for $U(n)$.

\begin{case}
For every prime divisor $p$ of $n/p'$ exactly two terms of $S$ are coprime with $p$, and at least two terms of $S$ are coprime with $p'$.
\end{case}

Let $n'=n/p'$ and let $S'$ be the image of $S$ under $f_{n|n'}$. Suppose at most one term of $S'$ is a unit. By Lemma \ref{gl} we see that $S$ is an $L(n;p')$-weighted zero-sum sequence. Suppose at least two terms of $S'$ are units. By the assumption in this case we see that exactly two terms of $S'$ are units, say $x_{j_1}'$ and $x_{j_2}'$ and the other terms of $S'$ are zero. 

It follows that all terms of $S$ are divisible by $n'$ except $x_{j_1}$ and $x_{j_2}$. Let $j\neq j_1,j_2$. If $x_j$ is divisible by $p'$, we get the contradiction that $x_j=0$. So all the terms of $S^{(p')}$ are non-zero except possibly two terms. As the sequence $S$ has length at least 4, we can find a subsequence $T$ of $S$ having length at least two which does not contain the terms $x_{j_1}$ and $x_{j_2}$. 

As all the terms of $T^{(p')}$ are non-zero and as $T^{(p')}$ has length at least 2, by Lemma 2.1 (ii) of \cite{sg} we see that $T^{(p')}$ is a $U(p')$-weighted zero-sum sequence. Also all the terms of $T$ are divisible by $n'$.  Hence, by taking $d=n'$ in Lemma \ref{lifts'} and by Lemma \ref{u2l} we see that $T$ is an $L(n;p')$-weighted zero-sum subsequence of $S$. So we get the contradiction that $S$ has an $L(n;p')$-weighted zero-sum subsequence.  

\begin{case}
For every prime divisor $p$ of $n$ at least two terms of $S$ are coprime to $p$, and there is a prime divisor $p$ of $n/p'$, such that at least three terms of $S$ are coprime to $p$.
\end{case}

In this case, by Lemma \ref{gl} we get the contradiction that $S$ is an $L(n;p')$-weighted zero-sum sequence.  
\end{proof}

\begin{lem}\label{s2l3}
Let $n=p'p\,q$ where $p',p\,,q$ are distinct primes and $n'=n/p$.  Then $U(n')\su f_{n|n'}\big(L(n;p')\big)$.
\end{lem}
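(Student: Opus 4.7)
The plan is to unpack the definition of $L(n;p')$ using multiplicativity of the Jacobi symbol over the factorization $n=p'pq$, and then invoke the Chinese remainder theorem to build a lift of a given element of $U(n')$ into $U(n)$ with a prescribed quadratic character modulo $p$.

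First I would rewrite the membership condition for $L(n;p')$ in a more concrete form. For $a\in U(n)$, multiplicativity gives
$$\Big(\dfrac{a}{n}\Big)=\Big(\dfrac{a}{p'}\Big)\Big(\dfrac{a}{p}\Big)\Big(\dfrac{a}{q}\Big),$$
so the condition $\big(\frac{a}{n}\big)=\big(\frac{a}{p'}\big)$ is equivalent to $\big(\frac{a}{p}\big)\big(\frac{a}{q}\big)=1$. Thus $a\in L(n;p')$ iff $a\bmod p$ and $a\bmod q$ have the same Legendre symbol value at their respective primes.

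Next I would use CRT to lift. Given $b\in U(n')=U(p'q)$, write $b$ under the CRT isomorphism as $(b_{p'},b_q)\in U(p')\times U(q)$. Since $n$ is squarefree and $p$ is coprime to $n'$, CRT gives $U(n)\cong U(p')\times U(p)\times U(q)$, and the preimages of $b$ under $f_{n|n'}$ are exactly the elements corresponding to triples $(b_{p'},a_p,b_q)$ with $a_p\in U(p)$ free. In particular $(\frac{a}{q})=(\frac{b_q}{q})$ is forced, while $(\frac{a}{p})=(\frac{a_p}{p})$ can be independently prescribed.

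The only remaining task is to choose $a_p\in U(p)$ with $\big(\frac{a_p}{p}\big)=\big(\frac{b_q}{q}\big)$; since $p$ is an odd prime (recall $n$ is odd throughout Section 2), $U(p)$ contains both quadratic residues and non-residues, so such $a_p$ exists regardless of whether the target value is $+1$ or $-1$. With this choice, the corresponding $a$ lies in $L(n;p')$ and satisfies $f_{n|n'}(a)=b$. The only subtlety worth checking carefully is that the condition defining $L(n;p')$ really decouples the $p$-component from the $p'$- and $q$-components under CRT; once that is verified via the Jacobi multiplicativity computation above, the rest is immediate.
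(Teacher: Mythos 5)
Your proof is correct and follows essentially the same route as the paper: both use the Chinese remainder theorem to lift $b\in U(n')$ to some $a\in U(n)$ whose $p$-component $c$ is chosen so that $\big(\frac{c}{p}\big)=\big(\frac{b}{q}\big)$, which by multiplicativity of the Jacobi symbol forces $\big(\frac{a}{n}\big)=\big(\frac{a}{p'}\big)$. Your reformulation of the membership condition as $\big(\frac{a}{p}\big)\big(\frac{a}{q}\big)=1$ is just an equivalent phrasing of the paper's displayed computation.
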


\begin{proof}
Let $n'=n/p$. As $p$ is coprime with $n'$, by the Chinese remainder theorem we have an isomorphism $\psi:U(n)\to U(n')\times U(p)$. Let $b\in U(n')$ and let $c\in U(p)$ such that $\Big(\dfrac{c}{p}\Big)=\Big(\dfrac{b}{q}\Big)$. Let $a\in U(n)$ such that $\psi(a)=(b,c)$. Then $a\in L(n;p')$ as
$$\Big(\dfrac{a}{n}\Big)=\Big(\dfrac{b}{n'}\Big)\Big(\dfrac{c}{p}\Big)=\Big(\dfrac{b}{p'q}\Big)\Big(\dfrac{b}{q}\Big)=\Big(\dfrac{b}{p'}\Big)=\Big(\dfrac{a}{p'}\Big).$$ As $f_{n|n'}(a)=b$, we get that $b\in f_{n|n'}\big(L(n;p')\big)$. 
\end{proof}

\begin{thm}\label{extl3}
Let $n$ be squarefree such that every prime divisor of $n$ is at least 7. Suppose $p'$ is a prime divisor of $n$ and $\Omega(n)=3$. Then a sequence in $\Z_n$ is a $D$-extremal sequence for $L(n;p')$ if and only either it is a $D$-extremal sequence for $U(n)$ or it is a permutation of a sequence $(x_1,x_2,x_3)$ which has one of the following forms.

\begin{itemize}
\item
The image of the sequence $(x_1,x_2)$ under $f_{n|n'}$ is a $D$-extremal sequence for $S(n')$ and $x_3$ is a non-zero multiple of $n'$, where $n'=n/p'$.

\item
The only term of $S$ which is not divisible by $p'$ is $x_1$ and the image of the sequence $(x_2,x_3)$ under $f_{n|n'}$ is a $D$-extremal sequence for $S(n')$ which is not a $D$-extremal sequence for $U(n')$, where $n'=n/p'$.
\end{itemize}
\end{thm}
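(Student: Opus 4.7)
The plan is to run the same three-case analysis as in Theorem \ref{dextl} for $n=p'pq$ squarefree with all prime divisors at least $7$, tracking what survives when $\Omega(n)=3$. By Theorems \ref{un} and \ref{l}, $D_{U(n)}(n)=D_{L(n;p')}(n)=4$, so any $D$-extremal sequence $S=(x_1,x_2,x_3)$ for $L(n;p')$ has length $3$ with no zero terms.

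For the forward direction, Case 3 (some prime of $n/p'$ has at least three coprime terms in $S$) is killed by Lemma \ref{gl}. In Case 1, an exceptional prime $p$ has at most one coprime term in $S$; the all-divisible subcase fails as in Theorem \ref{dextl}, and the single-coprime subcase splits on whether $p=p'$. If $p\ne p'$, then $n/p$ is a product of two primes and the newly-proved Lemma \ref{s2l3} gives $U(n/p)\su f_{n|n/p}(L(n;p'))$, so the truncation is $D$-extremal for $U(n/p)$ by Lemma \ref{lifts'}, and Theorem~6 of \cite{AMP} promotes $S$ to a $D$-extremal sequence for $U(n)$. If $p=p'$, only Lemma \ref{s2l} is available, so the truncation $T'=(x_2^{(n')},x_3^{(n')})$ is merely $D$-extremal for $S(n')$; applying Theorem \ref{dexts'} with $\Omega(n')=2$ either promotes $T'$ to $D$-extremal for $U(n')$ (giving $D$-extremal for $U(n)$) or yields $T'$ in the form of the second displayed bullet.

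Case 2 is the source of the first displayed form. Let $n'=n/p'$ and $S'$ be the image of $S$ in $\Z_{n'}$. The Case 2 hypotheses force $S'$ to have one or two units; the one-unit subcase is killed by Lemma \ref{gl}. In the two-unit subcase, the unique index divisible by $p$ must coincide with the one divisible by $q$ (otherwise two non-unit non-zero terms would appear in $S'$), so one term of $S$ (which we place as $x_3$) is a non-zero multiple of $n'$. It remains to show that $(x_1^{(n')},x_2^{(n')})$ is $D$-extremal for $S(n')$: assume otherwise, pick $a_1,a_2\in S(n')$ with $a_1x_1^{(n')}+a_2x_2^{(n')}=0$, and construct an $L(n;p')$-weighted zero-sum subsequence of $S$ via the inclusion $S(n')\times U(p')\su \psi(L(n;p'))$ of Lemma \ref{gl'}. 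When both $x_1,x_2$ are coprime to $p'$, build a two-term combination $b_1x_1+b_2x_2$ with $p'$-components chosen to cancel; when one of them (say $x_2$) is divisible by $p'$, build the three-term combination $b_1x_1+b_2x_2+b_3x_3$ by using that $x_3^{(p')}\in U(p')$ (since $x_3$ is a non-zero multiple of $n'$) to cancel the $p'$-part.

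For the reverse direction, the $D$-extremal-for-$U(n)$ case follows at once from $L(n;p')\su U(n)$. For Form~1, push any candidate $L(n;p')$-weighted zero-sum mod $n'$ using $f_{n|n'}(L(n;p'))\su S(n')$: it reduces to either a non-vanishing singleton or a hypothetical $S(n')$-zero-sum of $(x_1^{(n')},x_2^{(n')})$, which is ruled out by assumption. For Form~2, any subsequence containing $x_1$ forces $b_1^{(p')}x_1^{(p')}=0$ in $\Z_{p'}$ with both factors units, a contradiction; the only remaining subsequence $(x_2,x_3)$ projects mod $n'$ to the given $D$-extremal sequence for $S(n')$, hence has no $S(n')$-weighted zero-sum and hence no $L(n;p')$-weighted zero-sum. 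The principal obstacle is the Case~2 subcase in which exactly one of $x_1,x_2$ is divisible by $p'$, where the direct two-term lifting fails and one must augment with $x_3$ to balance the $p'$-component; this step is exactly where Lemma \ref{gl'} and the unit status of $x_3^{(p')}$ earn their keep.
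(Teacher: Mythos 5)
Your forward direction follows essentially the same route as the paper: the same three-case split on how many terms of $S$ are coprime to each prime divisor, the same use of Lemmas \ref{gl}, \ref{s2l}, \ref{s2l3} and \ref{lifts'} together with Theorem 6 of \cite{AMP} and Theorem \ref{dexts'}, with your Case 2 lifting via $S(n')\times U(p')\su \psi\big(L(n;p')\big)$ being a hands-on version of the paper's appeal to Observation \ref{obs3} and Griffiths' lemma. The paper explicitly omits the converse implication to save space; your sketch of it (reducing mod $n'$ to rule out zero-sums for Form 1, and reducing mod $p'$ to exclude any subsequence containing $x_1$ for Form 2) is correct and supplies that missing half.
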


\begin{proof}
Let $n$ and $p'$ be as in the statement of the theorem. Suppose the sequence $S=(x_1,x_2,x_3)$ is a $D$-extremal sequence for $L(n;p')$. Then all terms of $S$ must be non-zero.

\begin{case}
There is a prime divisor $p$ of $n$ such that at most one term of $S$ is coprime to $p$.
\end{case}

Suppose all terms of $S$ are divisible by $p$. We use a similar argument as in this subcase of the proof of Theorem \ref{dextl} to get the contradiction that $S$ has an $L(n;p')$-weighted zero-sum subsequence.

Suppose exactly one term of $S$ is not divisible by $p$. Let us assume that that term is $x_1$ and let $n'=n/p$. Let $T'$ be the image of $T=(x_2,x_3)$ under $f_{n|n'}$. By a similar argument as in this subcase of the proof of Theorem \ref{dextl}, we see that $T'$ is a $D$-extremal sequence for $S(n')$. 

Suppose $p\neq p'$. We claim that $T'$ is infact a $D$-extremal sequence for $U(n')$. As $\Omega(n')=2$, by Theorem \ref{un} we have $D_{U(n')}(n')=3$. So it is enough to show that $T'$ does not have any $U(n')$-weighted zero-sum subsequence. As $n$ is squarefree and $\Omega(n)=3$, by Lemma \ref{s2l3} we have $U(n')\su f_{n|n'}\big(L(n;p')\big)$. 

So if $T'$ has a $U(n')$-weighted zero-sum subsequence, by Lemma \ref{lifts'} we get the contradiction that $T$ (and hence $S$) has an $L(n;p')$-weighted zero-sum subsequence. Hence our claim is true. Thus, by Theorem 6 of \cite{AMP} we see that $S$ is a $D$-extremal sequence for $U(n)$. 

\begin{case}
For every prime divisor $p$ of $n/p'$ exactly two terms of $S$ are coprime to $p$, and at least two terms of $S$ are coprime to $p'$.
\end{case}

Let $n'=n/p'$. Let $S'$ be the image of the sequence $S$ under  $f_{n|n'}$. Suppose at most one term of $S'$ is a unit. By Lemma \ref{gl}, we see that $S$ is an $L(n;p')$-weighted zero-sum sequence. So we can assume that at least two terms of $S'$ are units, say $x_1'$ and $x_2'$. By the assumption in this case, we see that $x_1'$ and $x_2'$ are units and $x_3'$ is zero. So we see that $x_3$ is divisible by $n'$. As $x_3\neq 0$, it follows that $x_3$ is coprime to $p'$. 

If $(x_1',x_2')$ has an $S(n')$-weighted zero-sum subsequence, then the sequence $S'$ is an $S(n')$-weighted zero-sum sequence as $x_3'=0$. Also the sequence $S^{(p')}$ is a $U(p')$-weighted zero-sum sequence. Let $\psi:U(n)\to U(n')\times U(p')$ be the isomorphism given by the Chinese remainder theorem. From Lemma \ref{gl'}, we have $S(n')\times U(p')\su \psi(\big(L(n;p')\big)$. So by Observation \ref{obs3} we get the contradiction that $S$ is an $L(n;p')$-weighted zero-sum sequence.

Hence, $(x_1',x_2')$ does not have any $S(n')$-weighted zero-sum subsequence. By Theorem \ref{sn} we have $D_{S(n')}(n')=3$. So the sequence $(x_1',x_2')$ is a $D$-extremal sequence for $S(n')$.  

\begin{case}
For every prime divisor $p$ of $n$ at least two terms of $S$ are coprime to $p$, and there is a prime divisor $p$ of $n/p'$ such that at least three terms of $S$ are coprime to $p$.
\end{case}

In this case by Lemma \ref{gl}, we get the contradiction that $S$ is an $L(n;p')$-weighted zero-sum sequence. 
\end{proof}

\begin{rem}
We omit the proof of the other implication in the statement of this theorem as well as the proofs of Theorems \ref{extl2} and Theorem \ref{lext2'} to avoid making the paper lengthy.  
\end{rem}

\begin{thm}\label{extl2}
Let $n=p'q$ where $p',q$ are distinct primes which are at least 7. Then a sequence $S$ in $\Z_n$ is a $D$-extremal sequence for $L(n;p')$ if and only if $S$ is a permutation of a sequence $(x_1,x_2,x_3)$ which has one of the following forms.

\begin{itemize}
\item 
The image of the sequence $(x_1,x_2)$ under $f_{n|q}$ is a $D$-extremal sequence for $Q_q$ and $x_3$ is a non-zero multiple of $q$. 

\item
The only term of $S$ which is coprime to $p'$ is $x_1$ and the image of the sequence $(x_2,x_3)$ under $f_{n|q}$ is a $D$-extremal sequence for $Q_q$. 
\end{itemize}
\end{thm}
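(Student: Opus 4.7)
The plan is to prove the forward direction; the converse is omitted per the preceding remark. Let $S=(x_1,x_2,x_3)$ be a $D$-extremal sequence for $L(n;p')$. By Theorem \ref{l2}, $|S|=D_{L(n;p')}(n)-1=3$ and all terms are non-zero. Let $q$ denote the other prime divisor of $n$, so $n/p'=q$. Mimicking the case split used in the proof of Theorem \ref{extl3}, I would stratify by the number of terms of $S$ coprime to each prime divisor $p\in\{p',q\}$.

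\textbf{Case 1} (some prime $p$ has at most one term coprime to $p$). If all three terms of $S$ are divisible by $p$, the image of $S$ in $\Z_{n/p}$ has length $3=D_{S(n/p)}(n/p)$ by Theorem \ref{qp}, giving an $S(n/p)$-weighted zero-sum subsequence that lifts via Lemmas \ref{s2l} and \ref{lifts'} to an $L(n;p')$-weighted zero-sum subsequence of $S$, a contradiction. If exactly one term (say $x_1$) is coprime to $p$, then the image of $T=(x_2,x_3)$ in $\Z_{n/p}$ must be $D$-extremal for $S(n/p)=Q_{n/p}$ (else the same lifting argument applies). When $p=p'$, this directly matches the second form in the theorem. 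When $p=q$, I would show that this subcase cannot occur: writing $x_2=q\alpha_2$ and $x_3=q\alpha_3$ with $\alpha_2,\alpha_3\in U(p')$, Lemma \ref{u2l} lets us choose $a,b\in L(n;p')$ with $a\equiv-\alpha_3$ and $b\equiv\alpha_2\pmod{p'}$, which makes $ax_2+bx_3=q(a\alpha_2+b\alpha_3)$ vanish in $\Z_n$, contradicting $D$-extremality.

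\textbf{Case 2} (for the only prime divisor $q$ of $n/p'$ exactly two terms are coprime to $q$, and at least two terms are coprime to $p'$). Then exactly one term, say $x_3$, is a non-zero multiple of $q$, and $x_1,x_2$ are coprime to $q$. Using the CRT isomorphism $\psi:U(n)\to U(q)\times U(p')$, Lemma \ref{gl'} together with $|L(n;p')|=(p'-1)(q-1)/2=|Q_q\times U(p')|$ identifies $\psi(L(n;p'))=Q_q\times U(p')$. A direct subsequence-by-subsequence analysis (in the spirit of Observation \ref{obs3}), splitting each candidate zero-sum equation into its residues $\mod q$ and $\mod p'$, shows that every possible zero-sum either fails automatically (a unit being forced to equal zero modulo one of the primes) or reduces to the single mod-$q$ condition $-x_2'/x_1'\in Q_q$, where $x_i'=f_{n|q}(x_i)$. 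Hence $D$-extremality forces $-x_2'/x_1'\in U(q)\setminus Q_q$, which by the Remark following Theorem \ref{dexts'} says precisely that the image of $(x_1,x_2)$ under $f_{n|q}$ is $D$-extremal for $Q_q$. Combined with $x_3$ being a non-zero multiple of $q$, this is the first form.

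\textbf{Case 3} (at least two terms coprime to each prime and at least three coprime to $q$) is dispatched directly by Lemma \ref{gl}, which produces an $L(n;p')$-weighted zero-sum of $S$, a contradiction. The main obstacle is Case 2: the length $|S|=3$ is too short to run the argument used in Case 2 of the proof of Theorem \ref{dextl}, which extracted a length-$\geq 2$ subsequence of multiples of $n/p'$. Instead I must analyze each subsequence individually, and the CRT identification $\psi(L(n;p'))=Q_q\times U(p')$ is the key tool that isolates the mod-$q$ Jacobi residue condition $-x_2'/x_1'\in Q_q$ as the unique remaining obstruction — exactly what singles out the first form in the conclusion.
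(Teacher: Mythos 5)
The paper deliberately omits the proof of Theorem \ref{extl2} (see the remark after Theorem \ref{extl3}), so there is no in-paper argument to compare against; I can only judge your proposal against the adjacent proofs it imitates. Your forward direction is essentially correct. Cases 1 and 3 follow the template of Theorems \ref{dextl} and \ref{extl3} faithfully: the lifting via Lemmas \ref{s2l}, \ref{u2l} and \ref{lifts'} together with Theorem \ref{qp} works as you describe, and your observation that two nonzero multiples of $q$ already form an $L(n;p')$-weighted zero-sum pair correctly kills the subcase $p=q$ with exactly one term coprime to $q$. Your Case 2 is the genuinely new step, and your key identification $\psi\big(L(n;p')\big)=Q_q\times U(p')$ (Lemma \ref{gl'} plus the index-two count from Proposition 2 of the cited paper) is right; it decouples the two primes so that a subsequence is an $L(n;p')$-weighted zero-sum exactly when both projections are weighted zero-sums. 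I checked the subsequence-by-subsequence claim you assert: in both admissible configurations (both of $x_1,x_2$ units of $\Z_n$, or one a unit and one a multiple of $p'$), every proper subsequence fails automatically on one side and the full triple together with the pair $(x_1,x_2)$ reduce to the single condition $-x_2'/x_1'\in Q_q$, using that for $q\geq 7$ any three units of $\Z_q$ form a $Q_q$-weighted zero-sum sequence and any two or three units of $\Z_{p'}$ form a $U(p')$-weighted zero-sum sequence. So the conclusion of Case 2 is correct, though this central verification is asserted rather than written out, and it is precisely the part a referee would want to see in full.

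Two caveats. First, the converse is not a formality here: since the two listed forms impose no condition modulo $p'$ on $x_1,x_2$ in the first form, one must actually verify (by the same CRT decomposition) that no subsequence of such a triple is an $L(n;p')$-weighted zero-sum; omitting it leaves the ``if'' half of the theorem unproved, and unlike Theorem \ref{extl3} the paper gives you nothing to lean on. Second, you should derive ``$(x_1',x_2')$ is $D$-extremal for $Q_q$'' directly from the definition (both terms units of $\Z_q$ and $-x_2'/x_1'\notin Q_q$), not from the explicit form quoted in the remark after Theorem \ref{dexts'}: when $-1\notin Q_q$ there are $D$-extremal pairs for $Q_q$ (e.g. $(3,5)$ in $\Z_7$) in which neither term lies in $Q_q$, so the literal form ``$x_1\in Q_q$, $-x_2\notin Q_q$'' is not an exact restatement of $D$-extremality and invoking it as an equivalence would introduce an error that the definition-based route avoids.
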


\begin{rem}
By Theorems \ref{un} and \ref{l}, we see that when $n=p'q$ where $p',q$ are distinct primes which are at least 7, we have $D_{U(n)}(n)=3$ and $D_{L(n;p')}=4$. So we cannot compare the $D$-extremal sequences for $U(n)$ with the $D$-extremal sequences for $L(n;p')$. We also observe that the sequences which have been mentioned in Theorem \ref{extl2} are a proper subcollection of those which have been mentioned in Theorem \ref{extl3}.
\end{rem}

\section{$C$-extremal sequences for $L(n;p')$}

\begin{rem}\label{lc}
Let $p$ be a prime divisor of $n$. As $L(n;p)\su U(n)$, an $L(n;p)$-weighted zero-sum subsequence is also a $U(n)$-weighted zero-sum subsequence. So if $n$ is such that $C_{U(n)}(n) = C_{L(n;p)}(n)$, then a $C$-extremal sequence for $U(n)$ is also a $C$-extremal sequence for $L(n;p)$. It is interesting to observe that the converse is also true for `most' squarefree numbers as is shown in the next result. 
\end{rem}

\begin{thm}\label{cextl}
Let $n$ be a squarefree number such that every prime divisor of $n$ is at least 7 and let $p'$ be a prime divisor of $n$. Suppose $\Omega(n)\neq 2$. Then the $C$-extremal sequences for $L(n;p')$ are the same as the $C$-extremal sequences for $U(n)$.  
\end{thm}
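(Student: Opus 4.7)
The plan is to mirror the proof of Theorem \ref{cexts} while substituting in the lifting tools specific to $L(n;p')$. The easy direction follows from Theorems \ref{un} and \ref{l}, which under the hypothesis $\Omega(n)\neq 2$ give $C_{U(n)}(n)=C_{L(n;p')}(n)=2^{\Omega(n)}$, combined with Remark \ref{lc}; the case $\Omega(n)=1$ is trivial since then $L(n;p')=U(n)$. So assume $\Omega(n)\geq 3$ and let $S=(x_1,\ldots,x_l)$ be a $C$-extremal sequence for $L(n;p')$ with $l=2^{\Omega(n)}-1$ and every $x_i$ non-zero. I would then split into the same three cases used in Theorem \ref{cexts}, based on how many terms of $S$ are coprime to each prime of $n$.

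In the case where some prime $p$ of $n$ has at most one term of $S$ coprime to it, put $n'=n/p$. If every term is divisible by $p$, the image $S'$ in $\Z_{n'}$ has length $l\geq C_{S(n')}(n')=2^{\Omega(n')}$ (by Theorem \ref{sn}, applicable since $\Omega(n')\geq 2$), so it admits a consecutive $S(n')$-weighted zero-sum subsequence that lifts via Lemmas \ref{lifts'} and \ref{s2l} to a consecutive $L(n;p')$-weighted zero-sum subsequence of $S$, a contradiction. If exactly one term $x^*$ is coprime to $p$, the same lifting argument forces $x^*=x_{k+1}$ where $k+1=2^{\Omega(n')}$, since otherwise a consecutive run of length $\geq 2^{\Omega(n')}$ divisible by $p$ would lie entirely on one side. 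Then $S_1'=f_{n|n'}(x_1,\ldots,x_k)$ and $S_2'=f_{n|n'}(x_{k+2},\ldots,x_l)$ are each of length $C_{S(n')}(n')-1$ and have no consecutive $S(n')$-weighted zero-sum subsequence, hence are $C$-extremal for $S(n')$. Since $n'$ is squarefree, not prime, with prime divisors at least $7$, Theorem \ref{cexts} upgrades them to $C$-extremal sequences for $U(n')$, and Theorem 5 of \cite{SKS2} then yields that $S$ is $C$-extremal for $U(n)$.

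In the remaining cases every prime divisor of $n$ has at least two terms of $S$ coprime to it, so I would invoke Lemma \ref{gl}: if some prime of $n/p'$ has at least three coprime terms of $S$, or if the image $S'=f_{n|n'}(S)$ with $n'=n/p'$ has at most one unit, then Lemma \ref{gl} makes $S$ itself an $L(n;p')$-weighted zero-sum sequence, contradicting $C$-extremality since $S$ is a consecutive subsequence of itself. Otherwise each prime of $n/p'$ has exactly two coprime terms and $S'$ has at least two units. A counting argument, using that each of the $\Omega(n')$ primes of $n'$ kills exactly $l-2$ terms while only $l-2$ non-unit positions are available, forces the two unit indices $j_1,j_2$ to be the same for every prime of $n'$; the remaining $l-2$ terms of $S$ are therefore divisible by $n'$ and, being non-zero, reduce to units in $\Z_{p'}$. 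Since $l\geq 7$, the complement of $\{j_1,j_2\}$ contains two consecutive indices by a pigeonhole argument, and the corresponding two-term block gives a $U(p')$-weighted zero-sum in $\Z_{p'}$ that lifts via Lemmas \ref{lifts'} (with $d=n'$) and \ref{u2l} to a consecutive $L(n;p')$-weighted zero-sum subsequence of $S$, again a contradiction.

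The main obstacle is the structural analysis in the final subcase: one must show that the two unit indices of $S'$ are common to all primes of $n'$ (rather than just blocking individual primes) so that the complementary terms are genuinely divisible by $n'$, and then one must locate a two-term consecutive block disjoint from those two indices. The length requirement for such a block is exactly where the hypothesis $\Omega(n)\neq 2$ becomes essential, since $\Omega(n)=2$ would give $l=3$, leaving no room to dodge two distinguished positions and find a consecutive pair in the complement.
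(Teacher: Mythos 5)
Your proof follows the paper's argument essentially step for step: the same three-case split according to how many terms are coprime to each prime, the same lifting lemmas (Lemmas \ref{s2l}, \ref{lifts'}, \ref{u2l}, \ref{gl}), the reduction of the two halves $S_1'$, $S_2'$ to $C$-extremal sequences for $U(n')$ via Theorem \ref{cexts} followed by Theorem 5 of \cite{SKS2}, and the same consecutive two-term block avoiding $x_{j_1},x_{j_2}$ in the final subcase. The only slip is in your closing aside: when $\Omega(n)=2$, Theorem \ref{l2} gives $C_{L(n;p')}(n)=6$, so the relevant length would be $5$ rather than $3$; this is commentary outside the proof and does not affect its correctness.
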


\begin{proof}
Let $n$ be a squarefree number such that  every prime divisor of $n$ is at least 7 and let $p'$ be a prime divisor of $n$. Suppose $\Omega(n)\neq 2$. By Theorems \ref{un} and \ref{l} and Remark \ref{lc}, it is enough to show that if $S$ is a $C$-extremal sequence for $L(n;p')$, then $S$ is a $C$-extremal sequence for $U(n)$.

Let $S=(x_1,\ldots,x_k)$ be a $C$-extremal sequence for $L(n;p')$. Then all terms of $S$ must be non-zero. When $n$ is a prime, then $n=p'$ and $L(n;p')=U(n)$. So $S$ is a $C$-extremal sequence for $U(n)$. Let $\Omega(n)\geq 3$. By Theorem \ref{l} we have $C_{L(n;p')}(n)=2^{\Omega(n)}$. So $S$ has length $l=2^{\Omega(n)}-1$.  

\begin{case}
There is a prime divisor $p$ of $n$ such that at most one term of $S$ is coprime to $p$. 
\end{case}

Suppose $x_{k+1}$ is divisible by $p$ where $k+1=(l+1)/2$. Then we can find a subsequence $T$ having consecutive terms of $S$ of length $k+1=2^{\Omega(n')}$ such that all the terms of $T$ are divisible by $p$. Let $n'=n/p$ and let $T'$ be the image of $T$ under $f_{n|n'}$. 

As $\Omega(n')=\Omega(n)-1\geq 2$ and as $T'$ has length $2^{\Omega(n')}$, by Theorem \ref{sn} we see that $T'$ has an $S(n')$-weighted zero-sum subsequence of consecutive terms. By Lemma \ref{s2l} we get that $S(n')\su f_{n|n'}\big(L(n;p')\big)$. So by Lemma \ref{lifts'} we get the contradiction that $T$ (and hence $S$) has an $L(n;p')$-weighted zero-sum subsequence of consecutive terms. 

So $x_{k+1}$ is not divisible by $p$. Let $S_1=(x_1,\ldots,x_k)$ and $S_2=(x_{k+2},\ldots,x_l)$. Let $S_1'$ and $S_2'$ be the images of $S_1$ and $S_2$ respectively under $f_{n|n'}$. Suppose $S_1'$ has an $S(n')$-weighted zero-sum subsequence of consecutive terms. By Lemma \ref{s2l} we get that $S(n')\su f_{n|n'}\big(L(n;p')\big)$. So by Lemma \ref{lifts'} we get the contradiction that $S_1$ (and hence $S$) has an $L(n;p')$-weighted zero-sum subsequence of consecutive terms. 

So $S_1'$ does not have any $S(n')$-weighted zero-sum subsequence of consecutive terms. From Theorem \ref{sn} as $\Omega(n')\geq 2$ we have that $C_{S(n')}(n')=2^{\Omega(n')}$. As $S_1'$ has length $k=2^{\Omega(n')}-1$, it follows that $S_1'$ is a $C$-extremal sequence for $S(n')$. As $\Omega(n')\geq 2$, from Theorem \ref{cexts} we see that $S_1'$ is a $C$-extremal sequence for $U(n')$. Similarly $S_2'$ is also a $C$-extremal sequence for $U(n')$. By Theorem 5 of \cite{SKS2} we get that $S$ is a $C$-extremal sequence for $U(n)$.

\begin{case}
For every prime divisor $p$ of $n/p'$ exactly two terms of $S$ are coprime with $p$, and at least two terms of $S$ are coprime with $p'$.
\end{case}

We use a similar argument as in this case of the proof of Theorem \ref{dextl}. We just observe that as the sequence $S$ has length at least 7, we can find a subsequence $T$ having consecutive terms of $S$ and having length at least two, which does not contain the terms $x_{j_1}$ and $x_{j_2}$. 

\begin{case}
For every prime divisor $p$ of $n$ at least two terms of $S$ are coprime to $p$, and there is a prime divisor $p$ of $n/p'$, such that at least three terms of $S$ are coprime to $p$.
\end{case}

In this case, by Lemma \ref{gl} we get the contradiction that $S$ is an $L(n;p')$-weighted zero-sum sequence. 
\end{proof}

\begin{thm}\label{lext2'}
Let $n=p'q$ where $p'$ and $q$ are distinct primes which are at least 7. Suppose $S=(x_1,x_2,x_3,x_4,x_5)$ is a sequence in $\Z_n$. Then $S$ is a $C$-extremal sequence for $L(n;p')$ if and only if $S$ has either of the following two forms. 

\begin{itemize}
\item 
The terms $x_1,x_3$ and $x_5$ are non-zero multiples of $q$ and the image of the sequence $(x_2,x_4)$ under $f_{n|q}$ is a $C$-extremal sequence for $Q_q$. 

\item
The only term of $S$ which is coprime to $p'$ is $x_3$ and the images of the sequences $(x_1,x_2)$ and $(x_4,x_5)$ under $f_{n|q}$ are $C$-extremal sequences for $Q_q$.
\end{itemize}
\end{thm}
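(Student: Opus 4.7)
The plan is to follow the case analysis from the proof of Theorem \ref{cextl}, adapted to the situation $\Omega(n)=2$ where the two families of Forms 1 and 2 now survive rather than being ruled out. By Theorem \ref{l2} a $C$-extremal sequence for $L(n;p')$ has length $5$ and every term must be nonzero. The key reformulation I will use is the following CRT description of $L(n;p')$: since $\bigl(\frac{a}{n}\bigr)=\bigl(\frac{a}{p'}\bigr)\bigl(\frac{a}{q}\bigr)$, the condition $a\in L(n;p')$ is equivalent to $\bigl(\frac{a}{q}\bigr)=1$, so under the isomorphism $\psi:U(n)\to U(p')\times U(q)$ given by the Chinese remainder theorem we have $\psi\bigl(L(n;p')\bigr)=U(p')\times Q_q$. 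By Observation \ref{obs3} and the independence of the two coordinate weights, a consecutive subsequence $T$ of $S$ is $L(n;p')$-weighted zero-sum if and only if $T^{(p')}$ is $U(p')$-weighted zero-sum in $\Z_{p'}$ and $T^{(q)}$ is $Q_q$-weighted zero-sum in $\Z_q$. Both directions will be verified through this criterion.

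For the forward implication, let $S=(x_1,\ldots,x_5)$ have no $L(n;p')$-weighted zero-sum subsequence of consecutive terms, and split into the three cases from Theorem \ref{cextl}. In Case 1 some prime $p\in\{p',q\}$ has at most one term of $S$ coprime to it; if all five terms are divisible by $p$ then the image in $\Z_{n/p}$ (a prime) has a $Q_{n/p}$-weighted zero-sum consecutive subsequence since $C_{Q_{n/p}}(n/p)=3$, which lifts by Lemmas \ref{s2l} and \ref{lifts'} to a contradiction. Hence exactly one term $x_j$ is coprime to $p$, and a consecutive subsequence of length $\geq 3$ in the complement forces $j=3$. If $p=q$, then $(x_1,x_2)$ consists of nonzero multiples of $q$, so $(x_1^{(p')},x_2^{(p')})$ is $U(p')$-weighted zero-sum by Lemma 2.1(ii) of \cite{sg}; Lemmas \ref{u2l} and \ref{lifts'} lift this to a contradiction. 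So $p=p'$, and the analogous argument in $\Z_q$ forces both $(x_1^{(q)},x_2^{(q)})$ and $(x_4^{(q)},x_5^{(q)})$ to not be $Q_q$-weighted zero-sum, which since $C_{Q_q}(q)=3$ makes each of them $C$-extremal for $Q_q$, giving Form 2. In Case 2 exactly two terms $x_{j_1},x_{j_2}$ are coprime to $q$ and the other three are nonzero multiples of $q$, hence coprime to $p'$; examining the ten pairs $\{j_1,j_2\}\su\{1,\ldots,5\}$ shows that $\{j_1,j_2\}=\{2,4\}$ is the unique choice whose complement contains no two consecutive integers, since any other choice produces an adjacent pair of nonzero multiples of $q$ coprime to $p'$, whose $\Z_{p'}$-image is $U(p')$-weighted zero-sum by Lemma 2.1(ii) of \cite{sg} and lifts via Lemmas \ref{u2l} and \ref{lifts'} to a contradiction. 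For the surviving configuration I apply the CRT criterion to $(x_1,x_2,x_3,x_4)$: its $\Z_{p'}$-image contains the units $x_1^{(p')}$ and $x_3^{(p')}$, and weights in $U(p')$ can always be chosen making this image $U(p')$-weighted zero-sum regardless of whether $x_2^{(p')}$ or $x_4^{(p')}$ vanishes. Hence $(x_1,x_2,x_3,x_4)$ is $L(n;p')$-weighted zero-sum if and only if $(x_2^{(q)},x_4^{(q)})$ is $Q_q$-weighted zero-sum, and so the latter must fail, giving Form 1. Case 3 is immediate from Lemma \ref{gl}.

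For the reverse direction, assuming $S$ has Form 1 or Form 2 I verify every nonempty consecutive subsequence $T$ via the CRT criterion. Under Form 1 the entries of $T^{(q)}$ are zero except possibly at the positions of $x_2,x_4$: if $T$ contains exactly one of them, $T^{(q)}$ has a single nonzero entry and is not $Q_q$-weighted zero-sum; if it contains both, $T^{(q)}$ is $Q_q$-weighted zero-sum if and only if $(x_2^{(q)},x_4^{(q)})$ is, which fails by hypothesis. Under Form 2, if $T$ contains $x_3$ then $T^{(p')}$ has the single nonzero entry $x_3^{(p')}\in U(p')$ and is not $U(p')$-weighted zero-sum; if $T$ omits $x_3$ then $T^{(p')}\equiv 0$ is trivially $U(p')$-weighted zero-sum and $T$ is $L(n;p')$-weighted zero-sum if and only if $T^{(q)}$ is $Q_q$-weighted zero-sum. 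The only such $T$ avoiding $x_3$ are the singletons (whose $\Z_q$-images are nonzero) and the pairs $(x_1,x_2),(x_4,x_5)$, whose $\Z_q$-images are $C$-extremal for $Q_q$ by hypothesis, and hence not $Q_q$-weighted zero-sum.

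The main obstacle is the extraction of the $Q_q$-extremality of $(x_2^{(q)},x_4^{(q)})$ in Case 2. The natural short subsequence to test is $(x_2,x_3,x_4)$, but this is not always $L(n;p')$-weighted zero-sum when $(x_2^{(q)},x_4^{(q)})$ is $Q_q$-weighted zero-sum: in the subcase where both $x_2^{(p')}=0$ and $x_4^{(p')}=0$, its $\Z_{p'}$-image $(0,x_3^{(p')},0)$ fails to be $U(p')$-weighted zero-sum because only one coordinate is nonzero. The fix is to work instead with $(x_1,x_2,x_3,x_4)$, where the presence of the two units $x_1^{(p')}$ and $x_3^{(p')}$ guarantees that the $\Z_{p'}$-image is $U(p')$-weighted zero-sum uniformly, letting the $Q_q$-constraint on $(x_2^{(q)},x_4^{(q)})$ be read off in every subcase.
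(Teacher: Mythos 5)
Your proof is correct, but note that the paper itself gives no proof to compare against: the authors explicitly omit the proof of Theorem \ref{lext2'} (together with Theorems \ref{extl2} and the converse half of \ref{extl3}) ``to avoid making the paper lengthy.'' Your argument follows the same case division as the paper's proof of Theorem \ref{cextl} and uses the same toolkit (Lemmas \ref{s2l}, \ref{u2l}, \ref{lifts'}, \ref{gl} and Griffiths' lemma), but you add one genuinely useful ingredient the paper only uses in the weaker one-directional form of Observation \ref{obs3} and Lemma \ref{gl'}: the exact identity $\psi\bigl(L(n;p')\bigr)=U(p')\times Q_q$, which upgrades the CRT criterion to an \emph{if and only if} and lets you read off both the necessity and the sufficiency of the two forms coordinatewise. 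This is what makes your reverse direction short, and your observation about testing $(x_1,x_2,x_3,x_4)$ rather than $(x_2,x_3,x_4)$ in Case 2 correctly handles the subcase $x_2^{(p')}=x_4^{(p')}=0$. Two small points to tighten: (i) the assertion that a length-four sequence in $\Z_{p'}$ containing two units is always $U(p')$-weighted zero-sum needs the one-line reduction that zero entries can carry arbitrary unit weights, after which Lemma 2.1(ii) of \cite{sg} applies to the subsequence of nonzero entries; (ii) in the reverse direction under Form 1 your dichotomy (``exactly one of $x_2,x_4$'' versus ``both'') omits the consecutive subsequences $(x_1)$, $(x_3)$, $(x_5)$ containing neither, for which $T^{(q)}=(0)$ \emph{is} $Q_q$-weighted zero-sum and you must instead invoke the $p'$-coordinate (or just that a nonzero singleton is never $A$-weighted zero-sum for $A\su U(n)$). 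Neither point affects the validity of the argument.
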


\section{Concluding remarks}

Let $n$ and $p'$ be as in the statement of Theorem \ref{lext2'}. From Theorems \ref{un} and \ref{l}, we have $D_{U(n)}(n)=3$ and $D_{L(n;p')}(n)=4$, and so we cannot compare the $D$-extremal sequences for $U(n)$ with the $D$-extremal sequences for $L(n;p')$. Also as $C_{U(n)}(n)=4$ and $C_{L(n;p')}(n)=6$, we cannot compare the corresponding $C$-extremal sequences. 

When $\Omega(n)=2$, from Theorem \ref{dexts'} we see that there exist $D$-extremal sequences for $S(n)$ which are not $D$-extremal sequences for $U(n)$ and when $\Omega(n)=3$, from Theorem \ref{extl3} we see that there exist $D$-extremal sequences for $L(n;p')$ which are not $D$-extremal sequences for $U(n)$. The following questions can be investigated. 

\begin{itemize}
\item 
Can we determine the value of $D_{S(n)}(n)$ and characterize the $D$-extremal sequences for $S(n)$ for general $n$.

\item 
If $n$ is such that $D_{U(n)}(n)=D_{S(n)}(n)$, can we say that a sequence in $\Z_n$ which does not have any $S(n)$-weighted zero-sum subsequence will not have any $U(n)$-weighted zero-sum subsequence. 

\item
We can ask analogues of the above questions for the constants $C_{S(n)}(n)$ and $C_{U(n)}(n)$.
\end{itemize}

{\bf Acknowledgement.}
Santanu Mondal would like to acknowledge CSIR, Govt. of India, for a research fellowship. We thank the referee for taking the time to go through this paper and for the corrections and suggestions.


\begin{thebibliography}{12}
\bibitem{ADU}
S. D. Adhikari, C. David, J. J. Urroz, {\em Generalizations of some zero-sum theorems}, Integers, {\bf 8}, (2008), Article A52.

\bibitem{AMP}
S. D. Adhikari, I. Molla, S. Paul, {\em Extremal sequences for some weighted zero-sum constants for cyclic groups}, CANT IV, Springer Proc. Math. Stat., {\bf 347}, (2021), 1--10.

\bibitem{AR}
S. D. Adhikari, P. Rath, {\em Davenport constant with weights and some related questions}, Integers, {\bf 6}, (2006), Article A30. 

\bibitem{sg}
Simon Griffiths, {\em The Erd\H{o}s-Ginzberg-Ziv theorem with units}, Discrete Math., {\bf 308}, No. 23, (2008), 5473--5484.

\bibitem{SKS1}
S. Mondal, K. Paul, S. Paul, {\em On a different weighted zero-sum constant}, to appear in Discrete Math. e-print is available at arxiv:2110.02539v3  

\bibitem{SKS2}
S. Mondal, K. Paul, S. Paul, {\em Extremal sequences for a weighted zero-sum constant}, Integers, {\bf 22}, (2022), Article A93. 

\bibitem{SKS3}
S. Mondal, K. Paul, S. Paul, {\em Zero-sum constants related to the Jacobi symbol}, submitted and e-print is available at arxiv:2111.14477v2 
\end{thebibliography}
\end{document}